\newtheorem{theorem}{Theorem}[section]
\newtheorem{lemma}[theorem]{Lemma}
\newtheorem{corollary}[theorem]{Corollary}
\newtheorem{corollary*}{Corollary}
\newtheorem{theorem*}{Theorem}
\theoremstyle{definition}
\newtheorem{definition}[theorem]{Definition}
\newcommand{\pic}{\begin{tikzpicture}}
\newcommand{\epic}{\end{tikzpicture}}
\newcommand{\N}{\mathbb{N}}
\newcommand{\pf}{\begin{proof}}
\newcommand{\epf}{\end{proof}}
\newcommand{\Tab}{\mbox{Tab}}
\newcommand{\Red}{\mbox{Red}}
\newcommand{\LS}{\mbox{LS}}
\newcommand{\EG}{\mbox{EG}}
\begin{document}

\begin{abstract}
The Little map and the Edelman-Greene insertion algorithm, a generalization of the Robinson-Schensted correspondence, are both used for enumerating the reduced decompositions of an element of the symmetric group. 
We show the Little map factors through Edelman-Greene insertion and establish new results about each map as a consequence. 
In particular, we resolve some conjectures of Lam and Little.
\end{abstract}

\title{Relating Edelman-Greene insertion to the
Little map}
\author{Zachary Hamaker and Benjamin Young}
\date{\today}
\maketitle

\section{Introduction}
\label{sec:introduction}

\subsection{Preliminaries}
\label{ssec:Preliminaries}
In this paper, we clarify the relationship between two algorithmic bijections, due respectively to Edelman-Greene~\cite{edelman1987balanced} and to Little~\cite{little2003combinatorial}, both of which deal with reduced decompositions in the symmetric group $S_n$.  It is well known that $S_n$ can be  viewed as a Coxeter group with the presentation
\[
S_n = \langle s_1,s_2, \dots, s_{n-1} \mid s_i^2 = 1,\ s_i s_j = s_j s_i\ \mbox{for} \ |i-j| \geq 2,\ s_i s_{i+1} s_i = s_{i+1} s_i s_{i+1}\rangle
\]
where $s_i$ can be viewed as the transposition $(i\ i{+}1)$. Let $\sigma = \sigma_1 \sigma_2 \dots \sigma_n \in S_n$. 
A \emph{reduced decomposition} or \emph{reduced expression} of $\sigma$ is a minimal-length sequence $s_{w_1}, s_{w_2}, \dots, s_{w_m}$ such that $\sigma = s_{w_1} s_{w_2} \dots s_{w_m}$.  
The word $w = w_1w_2\dots w_m$ is called a \emph{reduced word} of $\sigma$. 
It is convenient to refer to a reduced decomposition by its corresponding reduced word and we will conflate the two often. 
The set of all reduced decompositions of $\sigma$ is denoted $\Red(\sigma)$. 
An \emph{inversion} in $\sigma$ is a pair $(i,j)$ with $i<j$ and $\sigma_i>\sigma_j$.
Let $l(\sigma)$ be the number of inversions in $\sigma$. 
Since each transposition $s_i$ either introduces or removes an inversion, for $w = w_1 \dots w_m$ a reduced word of $\sigma$, we can show $m  = l(\sigma)$.

The enumerative theory of reduced decompositions was first studied in~\cite{stanley1984number}, where using algebraic techniques it is shown for the reverse permutation $\sigma = n \dots 21$ that
\begin{equation}
\label{eq:count sorting networks}
|\Red(\sigma)| = \frac{{n \choose 2}!}{(2n-3)(2n-5)^2\dots5^{n-2}3^{n-2}}.
\end{equation}
This is the same as the number of standard Young tableaux with the staircase shape $\lambda = (n-1,n-2, \dots , 1)$. 
In addition, Stanley conjectured  for arbitrary $\sigma \in S_n$ that $|\Red(\sigma)|$ can be expressed as the number of standard Young tableaux of various shapes (possibly with multiplicity). 
This conjecture was resolved in~\cite{edelman1987balanced} using a generalization of the Robinson-Schensted insertion algorithm, usually called \emph{Edelman-Greene insertion}. 
Edelman-Greene insertion maps a reduced word $w$ to the pair of Young tableaux $(P(w),Q(w))$ where the entries of $P(w)$ are row-and-column strict and $Q(w)$ is a standard Young tableau.  
The same map also provides a bijective proof of (\ref{eq:count sorting networks}), as there is only one possibility for $P(w)$ while every standard $Q(w)$ is possible. 

Algebraic techniques developed in~\cite{lascoux1985schubert} can be used to compute the exact multiplicity of each shape for given $\sigma$. 
A bijective realization of Lascoux and Sch\"utzenberger's techniques in this setting is demonstrated in~\cite{little2003combinatorial}. 
A \emph{descent} is an inversion of the form $(i,i+1)$.
Permutations with precisely one descent are referred to as \emph{Grassmannian}.
There is a simple bijection between reduced words of a Grassmannian permutation $\sigma$ and standard Young tableaux of a shape determined by $\sigma$.
The Little map works by applying a sequence of modifications referred to as \emph{Little bumps} to the reduced word $w$ until the modified word's corresponding permutation is Grassmannian so that it can be mapped to a standard Young tableau denoted $\LS(w)$. 
%Little bumps have a geometric realization when $w$ is represented by its wiring diagram.

\subsection{Results}
\label{ssec:results}
Since the Little map's introduction, there has been speculation on its relationship to Edelman-Greene insertion. 
In the appendix of~\cite{garsia2002saga}, written by Little, Conjecture 4.3.2 asserts that $\LS(w) = Q(w)$ when the maps are restricted to reduced words of the reverse permutation. 
Similar comments are made in~\cite{little2003combinatorial}.
We show the connection is much stronger than previously suspected: this equality is true for every permutation.
\begin{theorem}
\label{thm:same map}
Let $w$ be a reduced word. Then
\[
Q(w) = \LS(w).
\]
\end{theorem}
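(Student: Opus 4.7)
The plan is to prove the theorem by induction on the number of Little bumps needed to transform $w$ into a reduced word of a Grassmannian permutation. This reduces the theorem to two claims: a base case about Grassmannian reduced words, and an invariance lemma stating that a single Little bump preserves the Edelman-Greene recording tableau. Together, these yield $Q(w) = Q(w') = \dots = Q(w^{(k)}) = \LS(w^{(k)}) = \LS(w)$, where $w^{(k)}$ is the Grassmannian reduced word produced by the Little map.

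For the base case, when $w$ is a reduced word of a Grassmannian permutation, $\LS(w)$ is defined via the standard bijection with standard Young tableaux of a shape determined by the permutation. I would verify directly that Edelman-Greene insertion applied to such a $w$ produces this same $Q$-tableau. Grassmannian reduced words decompose naturally into row strips corresponding to the rows of the associated shape, and tracing through EG insertion shows that each letter is placed in the expected box. This step should be elementary but requires care in matching the conventions of the two constructions.

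The bulk of the work is the invariance lemma: if $w'$ is obtained from $w$ by one Little bump, then $Q(w) = Q(w')$. My approach would be to compare the EG insertion sequences of $w$ and $w'$ position by position and show that the sequence of bumping paths terminates at the same cell at every step, even if the intermediate $P$-tableaux differ. A Little bump decrements one letter of the word and can cascade, changing later letters to preserve reducedness; so the effect is potentially global. I would also try to pin down the precise relationship between $P(w)$ and $P(w')$, which I expect to be a shift or jeu-de-taquin-like modification on the row-and-column strict tableau, since such a structural description would make the matching of bumping paths transparent and would likely yield the consequences for the Lam and Little conjectures promised in the abstract.

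The main obstacle is precisely this invariance lemma, and specifically the combinatorial control over how EG bumping paths interact with a Little bump's cascade. I would attack it by isolating the single initial decrement and the first resulting cascade, handling them with a case analysis on whether the modified letter lies in the same row, a higher row, or a different column of the running $P$-tableau, and then using the braid and commutation relations of $S_n$ together with the defining properties of EG bumping to reconcile the two insertion sequences. The delicate point is that a Little bump can trigger a cascade of arbitrary length, so some inductive or telescoping argument \emph{within} the invariance lemma will likely be needed to reduce the general case to a single local move.
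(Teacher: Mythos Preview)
Your overall decomposition matches the paper's exactly: reduce Theorem~\ref{thm:same map} to (i) the Grassmannian base case $\Tab(w)=Q(w)$ and (ii) an invariance lemma asserting that a single Little bump preserves $Q$. The base case is handled in the paper much as you describe (Lemma~\ref{lem:grassmannian same map}), by tracking the $b_j$-trajectories of a Grassmannian word through EG insertion column by column.

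Where you diverge is in the attack on the invariance lemma, and this is where the paper's key idea lies. The paper does \emph{not} attempt a direct position-by-position comparison of the EG insertions of $w$ and $w{\uparrow}$. Instead it uses a canonical-form argument. Every reduced word $w$ is Coxeter--Knuth equivalent to the \emph{column reading word} $\tau(w)$ of $P(w)$. The paper proves two ingredients separately: (a) the invariance lemma holds for column reading words (Lemma~\ref{lem:column word invariant}), which is nearly trivial because such words insert column by column and a Little bump decrements each letter at most once, so $\tau(w){\uparrow}$ is again a column reading word with the same column lengths; and (b) Coxeter--Knuth moves commute with Little bumps (Lemma~\ref{lem:ck little commute}) and act on $Q$ the same way before and after a bump (Lemma~\ref{lem:ck little on q}). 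These last two are local statements about three consecutive letters, checked by a finite case analysis on wiring diagrams. Combining (a) and (b) gives $Q(w)=Q(\tau(w)\alpha_1\cdots\alpha_k)=Q((\tau(w){\uparrow})\alpha_1\cdots\alpha_k)=Q(w{\uparrow})$.

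Your direct approach is not obviously doomed, but the cascade in a Little bump can touch arbitrarily many letters of $w$, and each altered letter can shift a bumping path that then interacts with EG's special-bump rule; controlling all of this by case analysis on the running $P$-tableau is precisely the morass the canonical-form trick sidesteps. The missing idea in your plan is to factor through a Coxeter--Knuth representative for which the invariance is essentially free, and to replace a global statement about entire insertion sequences by a local commutation statement between Coxeter--Knuth moves and Little bumps.
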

The proof is based on an argument from canonical form. 
First, we verify the theorem for the \emph{column reading word}, a canonical reduced word associated to $P(w)$ that plays nicely with both Edelman-Greene insertion and Little bumps.
We then show the statement's truth is invariant under Coxeter-Knuth moves, transformations that span the space of reduced words with identical $P(w)$.

Given Theorem \ref{thm:same map}, one might suspect the respective structures of the two maps are intimately related.  
Specifically, Conjecture 2.5 of~\cite{lam2010stanley} proposes that Little bumps relate to Edelman-Greene insertion in a way that is analogous to the role dual Knuth transformations play for the Robinson-Schensted-Knuth algorithm.

Let $v$ and $w$ be reduced words.
We say $v$ and $w$ \emph{communicate} if there exists a sequence of Little bumps changing $v$ to $w$.
This is an equivalence relation as Little bumps are invertible.
\begin{theorem}[Lam's Conjecture]
\label{thm:lam}

Let $v$ and $w$ be two reduced words. Then $v$ and $w$ communicate if and only if $Q(v) = Q(w)$.

\end{theorem}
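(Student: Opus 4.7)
The plan is to deduce Lam's Conjecture directly from Theorem~\ref{thm:same map}, converting both implications into statements about the Little map $\LS$.

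For the forward direction (``$\Rightarrow$''), I would observe that Little bumps are the very moves used to define $\LS$: by construction, one repeatedly Little-bumps a reduced word until its associated permutation becomes Grassmannian, and then reads off a standard tableau. Hence $\LS$ is unchanged by a single Little bump, and so is constant on each communication class. Since Theorem~\ref{thm:same map} identifies $\LS(v) = Q(v)$, any two communicating words have identical $Q$-tableaux.

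For the backward direction (``$\Leftarrow$''), suppose $Q(v) = Q(w) = T$. By Theorem~\ref{thm:same map} also $\LS(v) = \LS(w) = T$. Let $u_v$ be the Grassmannian reduced word obtained by iterating forward Little bumps on $v$, and define $u_w$ analogously. Both $u_v$ and $u_w$ map to the same tableau $T$ under the simple bijection between reduced words of a Grassmannian permutation and standard Young tableaux of the corresponding shape. That bijection is injective on the reduced words of any fixed Grassmannian permutation, so if I can show $u_v$ and $u_w$ are reduced words of the same Grassmannian permutation, I conclude $u_v = u_w$. Then $v \sim u_v = u_w \sim w$ by transitivity of communication, finishing the proof.

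The main obstacle is therefore the lemma that the terminal Grassmannian permutation produced by the Little algorithm is determined by $T$ alone. A priori, Grassmannian permutations with the same underlying shape but different descent positions could yield the same $T$, so I must verify that Little's bookkeeping pins down both the shape and the descent position of the terminal Grassmannian. I expect this to follow by tracking carefully how Little bumps interact with descent positions (e.g., by showing the descent of $u_v$ is encoded either in the length of $v$ together with the shape of $T$, or directly in the cell content of $T$), but establishing this ``canonical Grassmannian'' lemma is where the real work lies; once in hand, the rest of the argument is immediate.
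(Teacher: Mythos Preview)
Your forward direction contains a subtle but real error: the Little map $\LS$ is defined using only the \emph{canonical} Little bumps (those at the swap of the last inversion of the current permutation), so nothing in the definition tells you $\LS(w{\uparrow}) = \LS(w)$ for an \emph{arbitrary} bump $\uparrow$. That invariance is exactly the content of Theorem~\ref{thm:q invariant under bump} (together with Theorem~\ref{thm:same map}); it is not free ``by construction.'' The paper's proof simply cites Theorem~\ref{thm:q invariant under bump} for this direction.

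The backward direction has a more serious problem: the ``canonical Grassmannian'' lemma you are aiming for is false. Take $v = 1$ and $w = 2$. Both are already Grassmannian (for $s_1 = 21$ and $s_2 = 132$ respectively), so the Little map stops immediately and $u_v = 1$, $u_w = 2$. They satisfy $\Tab(v) = \Tab(w) = Q(v) = Q(w)$ (a single box containing $1$), yet they are reduced words of \emph{different} Grassmannian permutations, so $u_v \neq u_w$. More generally, the paper notes explicitly that several Grassmannian permutations share the same tableau shape, differing only by leading or trailing fixed points; neither the length of $w$ nor the shape or content of $T$ distinguishes them. So no amount of bookkeeping on descents will produce the lemma you want.

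The paper handles this by proving something different: rather than showing $u_v = u_w$, it shows that any two Grassmannian words of the same shape \emph{communicate}. Concretely, it exhibits an explicit sequence of Little bumps that, applied to a Grassmannian word, decrements every letter once---thereby converting an initial fixed point into a terminal one, or creating a terminal fixed point if none existed initially. Iterating this (and its inverse) connects every Grassmannian word of a given shape to the unique \emph{minimal} Grassmannian of that shape. That construction, not a uniqueness lemma, is where the real work lies.
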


\subsection{Random Sorting networks}
In recent years, there has been interest in the properties of randomly chosen reduced decompositions for the reverse permutation, known as \emph{random sorting networks}.
 introduced in~\cite{angel2007random} and studied further in~\cite{angel2011pattern, angel2010random}.  
Little is known about these objects rigorously, though conjectures are plentiful, striking, and strongly supported by numerical evidence.  
Most of the results that are known come from analyzing the asymptotics of staircase-shaped Young tableaux, by way of the Edelman-Greene correspondence.  

For instance, it is conjectured in~\cite[Conjecture 2]{angel2007random} that the ``partial'' permutation matrix of a random sorting network, obtained by concatenating the first half of the transpositions, has its nonzero entries distributed according to the Archimedean distribution.
Curiously, this distribution is also found in the limiting shape of a random domino tiling of the Aztec Diamond~\cite{chhita2012asymptotic}.  
However, the current best result in this direction~\cite[Theorem 4]{angel2007random} is an octagonal bound on the support of the nonzero entries in the partial permutation matrix.
The bound is obtained by computing the limiting profile of a random staircase Young tableau (which can be done precisely), and then attempting to push this information through the Edelman-Greene correspondence (which cannot).  

We think that an incomplete understanding of the Edelman-Greene algorithm is one of the main obstacles to progress on the random sorting network problem.  
As such, we hope that by strengthening the combinatorial foundations of this area, better asymptotic characterizations of random sorting networks will be attained.  
We regard this paper as a first step in this direction.

\subsection{Structure of the paper}
\label{ssec:structure}

In the second section, we review those parts of~\cite{edelman1987balanced,little2003combinatorial} which we need: we define Edelman-Greene insertion and the Little map, as well as generalized Little bumps.
Additionally, we state some properties of these maps that are important to our work.
The third section defines Coxeter-Knuth transformations and studies their interaction with Little bumps and action on $Q(w)$.
We conclude in the  fourth section by proving our main results and resolving several conjectures of Little.

\subsection{Acknowledgments}
We would like to thank Omar Angel, Vadim Gorin, Ander Holroyd, Thomas Lam, David Little, Eric Nordenstam, Dan Romik, Balint Virag and Peter Winkler for helpful discussions.  This research began while we were visiting the Mathematical Sciences Research Institute in 2012, for the program in Random Spatial Processes.

This project made heavy use of computer experiments in Sage~\cite{sage}.  Also, David Little's applet~\cite{littleapplet} was very helpful in understanding the Little bijection.
 
\section{Two Maps}
\label{sec:two maps}

\subsection{Edelman-Greene insertion}
\label{ssec:eg}
In order to define Edelman-Greene insertion, we must first define a rule for inserting a number into a tableau.
Let $n \in \N$ and $T$ be a tableau with rows $R_1, R_2, \dots, R_k$ where $R_i = r^i_1\leq r^i_2\leq \dots\leq r^i_{l_i}$. 
We define the insertion rule for Edelman-Greene insertion, following~\cite{edelman1987balanced}.
\begin{enumerate}
\item If $n \geq r^1_{l_1}$ or if $R_1$ is empty, adjoin $n$ to the end of $R_1$.
\item  If $n < r^1_{l_1}$, let $j$ be the smallest number such that $n < r^1_j$. 
\begin{enumerate}
\item If $r^1_j = n+1$ and $r^1_{j-1} = n$, insert $n+1$ into $T' = R_2,\dots, R_k$ and leave $R_1$ unchanged.
\item Otherwise, replace $r^1_j$ with $n$ and insert it into $T' = R_2, \dots, R_k$.
\end{enumerate}
\end{enumerate}  
Aside from 2(a), this is the RSK insertion rule.
Such exceptional bumps are referred to as \emph{special}.
For $w = w_1 \dots w_m$ a word (not necessarily reduced), we define $\EG(w) = (P(w),Q(w))$ via the following sequence of tableaux (see Figure 
\ref{fig:eg example} for an example).
We obtain $P_1(w)$ by inserting $w_m$ into the empty tableau. 
Then $P_j(w)$ is obtained by inserting $w_{m-j+1}$ into $P_{j-1}(w)$.
Note we are inserting the entries of $w$ from right to left.
At each step, one additional box is added.
In $Q(w)$, the entry of each box records the time of the step in which it was added.
From this, we can conclude that $Q(w)$ is a standard Young tableau.
Note the fourth insertion in Figure \ref{fig:eg example} follows 2(a).
For $w$ a reduced word of some $\sigma$, it is shown in~\cite{edelman1987balanced} that the entries of $P(w)$ are strictly increasing across rows and down columns.
Additionally, we can recover $\sigma$ from $P(w)$ with no additional information, that is $P(w)$ determines $\sigma$.
\begin{figure}
\caption{Edelman-Greene insertion for $w = 4,2,1,2,3,2,4$
\label{fig:eg example}}

\begin{center}
\pic[scale = .5]

\draw (1,0) -- (0,0) -- (0,-1);
\draw (0,-1) -- (1,-1) -- (1,0);
\node at (0.5,-0.5) (1/1) {4};
\node at (1,1) {$P_1$};

\epic
\hspace{12pt}
\pic[scale = .5]

\draw (1,0) -- (0,0) -- (0,-1);
\draw (0,-1) -- (1,-1) -- (1,0);
\node at (0.5,-0.5) (1/1) {1};
\node at (1,1) {$Q_1$};

\epic
\hspace{24pt}
\pic[scale = .5]

\draw (1,0) -- (0,0) -- (0,-2);
\draw (0,-1) -- (1,-1) -- (1,0);
\node at (0.5,-0.5) (1/1) {2};
\draw (0,-2) -- (1,-2) -- (1,-1);
\node at (0.5,-1.5) (1/2) {4};
\node at (1,1) {$P_2$};

\epic
\hspace{12pt}
\pic[scale = .5]

\draw (1,0) -- (0,0) -- (0,-2);
\draw (0,-1) -- (1,-1) -- (1,0);
\node at (0.5,-0.5) (1/1) {1};
\draw (0,-2) -- (1,-2) -- (1,-1);
\node at (0.5,-1.5) (1/2) {2};
\node at (1,1) {$Q_2$};

\epic
\hspace{24pt}
\pic[scale = .5]

\draw (2,0) -- (0,0) -- (0,-2);
\draw (0,-1) -- (1,-1) -- (1,0);
\node at (0.5,-0.5) (1/1) {2};
\draw (1,-1) -- (2,-1) -- (2,0);
\node at (1.5,-0.5) (2/1) {3};
\draw (0,-2) -- (1,-2) -- (1,-1);
\node at (0.5,-1.5) (1/2) {4};
\node at (2,1) {$P_3$};

\epic
\hspace{12pt}
\pic[scale = .5]

\draw (2,0) -- (0,0) -- (0,-2);
\draw (0,-1) -- (1,-1) -- (1,0);
\node at (0.5,-0.5) (1/1) {1};
\draw (1,-1) -- (2,-1) -- (2,0);
\node at (1.5,-0.5) (2/1) {3};
\draw (0,-2) -- (1,-2) -- (1,-1);
\node at (0.5,-1.5) (1/2) {2};
\node at (2,1) {$Q_3$};

\epic
\hspace{24pt}
\pic[scale = .5]

\draw (2,0) -- (0,0) -- (0,-3);
\draw (0,-1) -- (1,-1) -- (1,0);
\node at (0.5,-0.5) (1/1) {2};
\draw (1,-1) -- (2,-1) -- (2,0);
\node at (1.5,-0.5) (2/1) {3};
\draw (0,-2) -- (1,-2) -- (1,-1);
\node at (0.5,-1.5) (1/2) {3};
\draw (0,-3) -- (1,-3) -- (1,-2);
\node at (0.5,-2.5) (1/3) {4};
\node at (2,1) {$P_4$};

\epic
\hspace{12pt}
\pic[scale = .5]

\draw (2,0) -- (0,0) -- (0,-3);
\draw (0,-1) -- (1,-1) -- (1,0);
\node at (0.5,-0.5) (1/1) {1};
\draw (1,-1) -- (2,-1) -- (2,0);
\node at (1.5,-0.5) (2/1) {3};
\draw (0,-2) -- (1,-2) -- (1,-1);
\node at (0.5,-1.5) (1/2) {2};
\draw (0,-3) -- (1,-3) -- (1,-2);
\node at (0.5,-2.5) (1/3) {4};
\node at (2,1) {$Q_4$};

\epic
\end{center}
\medskip
\begin{center}
\pic[scale = .5]

\draw (2,0) -- (0,0) -- (0,-4);
\draw (0,-1) -- (1,-1) -- (1,0);
\node at (0.5,-0.5) (1/1) {1};
\draw (1,-1) -- (2,-1) -- (2,0);
\node at (1.5,-0.5) (2/1) {3};
\draw (0,-2) -- (1,-2) -- (1,-1);
\node at (0.5,-1.5) (1/2) {2};
\draw (0,-3) -- (1,-3) -- (1,-2);
\node at (0.5,-2.5) (1/3) {3};
\draw (0,-4) -- (1,-4) -- (1,-3);
\node at (0.5,-3.5) (1/4) {4};
\node at (2,1) {$P_5$};

\epic
\hspace{12pt}
\pic[scale = .5]

\draw (2,0) -- (0,0) -- (0,-4);
\draw (0,-1) -- (1,-1) -- (1,0);
\node at (0.5,-0.5) (1/1) {1};
\draw (1,-1) -- (2,-1) -- (2,0);
\node at (1.5,-0.5) (2/1) {3};
\draw (0,-2) -- (1,-2) -- (1,-1);
\node at (0.5,-1.5) (1/2) {2};
\draw (0,-3) -- (1,-3) -- (1,-2);
\node at (0.5,-2.5) (1/3) {4};
\draw (0,-4) -- (1,-4) -- (1,-3);
\node at (0.5,-3.5) (1/4) {5};
\node at (2,1) {$Q_5$};

\epic
\hspace{24pt}
\pic[scale = .5]

\draw (2,0) -- (0,0) -- (0,-4);
\draw (0,-1) -- (1,-1) -- (1,0);
\node at (0.5,-0.5) (1/1) {1};
\draw (1,-1) -- (2,-1) -- (2,0);
\node at (1.5,-0.5) (2/1) {2};
\draw (0,-2) -- (1,-2) -- (1,-1);
\node at (0.5,-1.5) (1/2) {2};
\draw (1,-2) -- (2,-2) -- (2,-1);
\node at (1.5,-1.5) (2/2) {3};
\draw (0,-3) -- (1,-3) -- (1,-2);
\node at (0.5,-2.5) (1/3) {3};
\draw (0,-4) -- (1,-4) -- (1,-3);
\node at (0.5,-3.5) (1/4) {4};
\node at (2,1) {$P_6$};

\epic
\hspace{12pt}
\pic[scale = .5]

\draw (2,0) -- (0,0) -- (0,-4);
\draw (0,-1) -- (1,-1) -- (1,0);
\node at (0.5,-0.5) (1/1) {1};
\draw (1,-1) -- (2,-1) -- (2,0);
\node at (1.5,-0.5) (2/1) {3};
\draw (0,-2) -- (1,-2) -- (1,-1);
\node at (0.5,-1.5) (1/2) {2};
\draw (1,-2) -- (2,-2) -- (2,-1);
\node at (1.5,-1.5) (2/2) {6};
\draw (0,-3) -- (1,-3) -- (1,-2);
\node at (0.5,-2.5) (1/3) {4};
\draw (0,-4) -- (1,-4) -- (1,-3);
\node at (0.5,-3.5) (1/4) {5};
\node at (2,1) {$Q_6$};

\epic
\hspace{24pt}
\pic[scale = .5]

\draw (3,0) -- (0,0) -- (0,-4);
\draw (0,-1) -- (1,-1) -- (1,0);
\node at (0.5,-0.5) (1/1) {1};
\draw (1,-1) -- (2,-1) -- (2,0);
\node at (1.5,-0.5) (2/1) {2};
\draw (2,-1) -- (3,-1) -- (3,0);
\node at (2.5,-0.5) (3/1) {4};
\draw (0,-2) -- (1,-2) -- (1,-1);
\node at (0.5,-1.5) (1/2) {2};
\draw (1,-2) -- (2,-2) -- (2,-1);
\node at (1.5,-1.5) (2/2) {3};
\draw (0,-3) -- (1,-3) -- (1,-2);
\node at (0.5,-2.5) (1/3) {3};
\draw (0,-4) -- (1,-4) -- (1,-3);
\node at (0.5,-3.5) (1/4) {4};
\node at (3,1) {$P_7=P(w)$};

\epic
\hspace{12pt}
\pic[scale = .5]

\draw (3,0) -- (0,0) -- (0,-4);
\draw (0,-1) -- (1,-1) -- (1,0);
\node at (0.5,-0.5) (1/1) {1};
\draw (1,-1) -- (2,-1) -- (2,0);
\node at (1.5,-0.5) (2/1) {3};
\draw (2,-1) -- (3,-1) -- (3,0);
\node at (2.5,-0.5) (3/1) {7};
\draw (0,-2) -- (1,-2) -- (1,-1);
\node at (0.5,-1.5) (1/2) {2};
\draw (1,-2) -- (2,-2) -- (2,-1);
\node at (1.5,-1.5) (2/2) {6};
\draw (0,-3) -- (1,-3) -- (1,-2);
\node at (0.5,-2.5) (1/3) {4};
\draw (0,-4) -- (1,-4) -- (1,-3);
\node at (0.5,-3.5) (1/4) {5};
\node at (3,1) {$Q_7=Q(w)$};

\epic
\end{center}
\end{figure}

\begin{figure}
\caption{The Little map for the reduced decomposition $w_4 w_2 w_1 w_2 w_3 w_2 w_4$ of $\sigma = 35241$.  The dashed crosses show the modifications made by the next Little bump.
\label{fig:little example}}

\begin{tabular}{cc}
\pic[scale = .85]
\node[draw,circle] at (0,0) (0/1) {1};
\node[draw,circle] at (0,-1) (0/2) {2};
\node[draw,circle] at (0,-2) (0/3) {3};
\node[draw,circle] at (0,-3) (0/4) {4};
\node[draw,circle] at (0,-4) (0/5) {5};

\node[draw,circle] at (1,0) (1/1) {1};
\node[draw,circle] at (1,-1) (1/2) {2};
\node[draw,circle] at (1,-2) (1/3) {3};
\node[draw,circle] at (1,-3) (1/4) {5};
\node[draw,circle] at (1,-4) (1/5) {4};

\node[draw,circle] at (2,0) (2/1) {1};
\node[draw,circle] at (2,-1) (2/2) {3};
\node[draw,circle] at (2,-2) (2/3) {2};
\node[draw,circle] at (2,-3) (2/4) {5};
\node[draw,circle] at (2,-4) (2/5) {4};

\node[draw,circle] at (3,0) (3/1) {3};
\node[draw,circle] at (3,-1) (3/2) {1};
\node[draw,circle] at (3,-2) (3/3) {2};
\node[draw,circle] at (3,-3) (3/4) {5};
\node[draw,circle] at (3,-4) (3/5) {4};

\node[draw,circle] at (4,0) (4/1) {3};
\node[draw,circle] at (4,-1) (4/2) {2};
\node[draw,circle] at (4,-2) (4/3) {1};
\node[draw,circle] at (4,-3) (4/4) {5};
\node[draw,circle] at (4,-4) (4/5) {4};

\node[draw,circle] at (5,0) (5/1) {3};
\node[draw,circle] at (5,-1) (5/2) {2};
\node[draw,circle] at (5,-2) (5/3) {5};
\node[draw,circle] at (5,-3) (5/4) {1};
\node[draw,circle] at (5,-4) (5/5) {4};

\node[draw,circle] at (6,0) (6/1) {3};
\node[draw,circle] at (6,-1) (6/2) {5};
\node[draw,circle] at (6,-2) (6/3) {2};
\node[draw,circle] at (6,-3) (6/4) {1};
\node[draw,circle] at (6,-4) (6/5) {4};

\node[draw,circle] at (7,0) (7/1) {3};
\node[draw,circle] at (7,-1) (7/2) {5};
\node[draw,circle] at (7,-2) (7/3) {2};
\node[draw,circle] at (7,-3) (7/4) {4};
\node[draw,circle] at (7,-4) (7/5) {1};

\draw[ultra thick] (0/1) -- (1/1) -- (2/1) -- (3/2) -- (4/3) -- (5/4) -- (6/4) -- (7/5);
\draw[ultra thick] (0/2) -- (1/2) -- (2/3) -- (3/3) -- (4/2) -- (5/2) -- (6/3) -- (7/3);
\draw[ultra thick] (0/3) -- (1/3) -- (2/2) -- (3/1) -- (4/1) -- (5/1) -- (6/1) -- (7/1);
\draw[ultra thick] (0/4) -- (1/5) -- (2/5) -- (3/5) -- (4/5) -- (5/5) -- (6/5) -- (7/4);
\draw[ultra thick] (0/5) -- (1/4) -- (2/4) -- (3/4) -- (4/4) -- (5/3) -- (6/2) -- (7/2);

\draw[ultra thick, dashed] (6/4) -- (7/3); 
\draw[ultra thick, dashed] (6/3) -- (7/4); 

\draw[ultra thick, dashed] (3/2) -- (4/1); 
\draw[ultra thick, dashed] (3/1) -- (4/2); 

\draw[ultra thick, dashed] (2/1) -- (2.75,.75); 
\draw[ultra thick, dashed] (2.25,.75) -- (3/1); 

\epic
&
\pic[scale = .85]
\node[draw,circle] at (0,0) (0/1) {1};
\node[draw,circle] at (0,-1) (0/2) {2};
\node[draw,circle] at (0,-2) (0/3) {3};
\node[draw,circle] at (0,-3) (0/4) {4};
\node[draw,circle] at (0,-4) (0/5) {5};
\node[draw,circle] at (0,-5) (0/6) {6};

\node[draw,circle] at (1,0) (1/1) {1};
\node[draw,circle] at (1,-1) (1/2) {2};
\node[draw,circle] at (1,-2) (1/3) {3};
\node[draw,circle] at (1,-3) (1/4) {4};
\node[draw,circle] at (1,-4) (1/5) {6};
\node[draw,circle] at (1,-5) (1/6) {5};

\node[draw,circle] at (2,0) (2/1) {1};
\node[draw,circle] at (2,-1) (2/2) {2};
\node[draw,circle] at (2,-2) (2/3) {4};
\node[draw,circle] at (2,-3) (2/4) {3};
\node[draw,circle] at (2,-4) (2/5) {6};
\node[draw,circle] at (2,-5) (2/6) {5};

\node[draw,circle] at (3,0) (3/1) {2};
\node[draw,circle] at (3,-1) (3/2) {1};
\node[draw,circle] at (3,-2) (3/3) {4};
\node[draw,circle] at (3,-3) (3/4) {3};
\node[draw,circle] at (3,-4) (3/5) {6};
\node[draw,circle] at (3,-5) (3/6) {5};

\node[draw,circle] at (4,0) (4/1) {2};
\node[draw,circle] at (4,-1) (4/2) {4};
\node[draw,circle] at (4,-2) (4/3) {1};
\node[draw,circle] at (4,-3) (4/4) {3};
\node[draw,circle] at (4,-4) (4/5) {6};
\node[draw,circle] at (4,-5) (4/6) {5};

\node[draw,circle] at (5,0) (5/1) {2};
\node[draw,circle] at (5,-1) (5/2) {4};
\node[draw,circle] at (5,-2) (5/3) {1};
\node[draw,circle] at (5,-3) (5/4) {6};
\node[draw,circle] at (5,-4) (5/5) {3};
\node[draw,circle] at (5,-5) (5/6) {5};

\node[draw,circle] at (6,0) (6/1) {2};
\node[draw,circle] at (6,-1) (6/2) {4};
\node[draw,circle] at (6,-2) (6/3) {6};
\node[draw,circle] at (6,-3) (6/4) {1};
\node[draw,circle] at (6,-4) (6/5) {3};
\node[draw,circle] at (6,-5) (6/6) {5};

\node[draw,circle] at (7,0) (7/1) {2};
\node[draw,circle] at (7,-1) (7/2) {4};
\node[draw,circle] at (7,-2) (7/3) {6};
\node[draw,circle] at (7,-3) (7/4) {3};
\node[draw,circle] at (7,-4) (7/5) {1};
\node[draw,circle] at (7,-5) (7/6) {5};

\draw[ultra thick] (0/1) -- (1/1) -- (2/1) -- (3/2) -- (4/3) -- (5/3) -- (6/4) -- (7/5);
\draw[ultra thick] (0/2) -- (1/2) -- (2/2) -- (3/1) -- (4/1) -- (5/1) -- (6/1) -- (7/1);
\draw[ultra thick] (0/3) -- (1/3) -- (2/4) -- (3/4) -- (4/4) -- (5/5) -- (6/5) -- (7/4);
\draw[ultra thick] (0/4) -- (1/4) -- (2/3) -- (3/3) -- (4/2) -- (5/2) -- (6/2) -- (7/2);
\draw[ultra thick] (0/5) -- (1/6) -- (2/6) -- (3/6) -- (4/6) -- (5/6) -- (6/6) -- (7/6);
\draw[ultra thick] (0/6) -- (1/5) -- (2/5) -- (3/5) -- (4/5) -- (5/4) -- (6/3) -- (7/3);

\draw[ultra thick, dashed] (6/3) -- (7/4); 
\draw[ultra thick, dashed] (6/4) -- (7/3); 

\draw[ultra thick, dashed] (5/2) -- (6/3); 
\draw[ultra thick, dashed] (5/3) -- (6/2); 

\draw[ultra thick, dashed] (3/2) -- (4/1); 
\draw[ultra thick, dashed] (3/1) -- (4/2); 

\draw[ultra thick, dashed] (2/1) -- (2.75,.75); 
\draw[ultra thick, dashed] (2.25,.75) -- (3/1); 

\epic
\\
Wiring diagram for $w$ & Wiring diagram for $w{\uparrow_7}$ \\\\
\pic[scale=.85]
\node[draw,circle] at (0,0) (0/1) {1};
\node[draw,circle] at (0,-1) (0/2) {2};
\node[draw,circle] at (0,-2) (0/3) {3};
\node[draw,circle] at (0,-3) (0/4) {4};
\node[draw,circle] at (0,-4) (0/5) {5};
\node[draw,circle] at (0,-5) (0/6) {6};
\node[draw,circle] at (0,-6) (0/7) {7};

\node[draw,circle] at (1,0) (1/1) {1};
\node[draw,circle] at (1,-1) (1/2) {2};
\node[draw,circle] at (1,-2) (1/3) {3};
\node[draw,circle] at (1,-3) (1/4) {4};
\node[draw,circle] at (1,-4) (1/5) {5};
\node[draw,circle] at (1,-5) (1/6) {7};
\node[draw,circle] at (1,-6) (1/7) {6};

\node[draw,circle] at (2,0) (2/1) {1};
\node[draw,circle] at (2,-1) (2/2) {2};
\node[draw,circle] at (2,-2) (2/3) {3};
\node[draw,circle] at (2,-3) (2/4) {5};
\node[draw,circle] at (2,-4) (2/5) {4};
\node[draw,circle] at (2,-5) (2/6) {7};
\node[draw,circle] at (2,-6) (2/7) {6};

\node[draw,circle] at (3,0) (3/1) {2};
\node[draw,circle] at (3,-1) (3/2) {1};
\node[draw,circle] at (3,-2) (3/3) {3};
\node[draw,circle] at (3,-3) (3/4) {5};
\node[draw,circle] at (3,-4) (3/5) {4};
\node[draw,circle] at (3,-5) (3/6) {7};
\node[draw,circle] at (3,-6) (3/7) {6};

\node[draw,circle] at (4,0) (4/1) {2};
\node[draw,circle] at (4,-1) (4/2) {3};
\node[draw,circle] at (4,-2) (4/3) {1};
\node[draw,circle] at (4,-3) (4/4) {5};
\node[draw,circle] at (4,-4) (4/5) {4};
\node[draw,circle] at (4,-5) (4/6) {7};
\node[draw,circle] at (4,-6) (4/7) {6};

\node[draw,circle] at (5,0) (5/1) {2};
\node[draw,circle] at (5,-1) (5/2) {3};
\node[draw,circle] at (5,-2) (5/3) {1};
\node[draw,circle] at (5,-3) (5/4) {5};
\node[draw,circle] at (5,-4) (5/5) {7};
\node[draw,circle] at (5,-5) (5/6) {4};
\node[draw,circle] at (5,-6) (5/7) {6};

\node[draw,circle] at (6,0) (6/1) {2};
\node[draw,circle] at (6,-1) (6/2) {3};
\node[draw,circle] at (6,-2) (6/3) {5};
\node[draw,circle] at (6,-3) (6/4) {1};
\node[draw,circle] at (6,-4) (6/5) {7};
\node[draw,circle] at (6,-5) (6/6) {4};
\node[draw,circle] at (6,-6) (6/7) {6};

\node[draw,circle] at (7,0) (7/1) {2};
\node[draw,circle] at (7,-1) (7/2) {3};
\node[draw,circle] at (7,-2) (7/3) {5};
\node[draw,circle] at (7,-3) (7/4) {7};
\node[draw,circle] at (7,-4) (7/5) {1};
\node[draw,circle] at (7,-5) (7/6) {4};
\node[draw,circle] at (7,-6) (7/7) {6};

\draw[ultra thick] (0/1) -- (1/1) -- (2/1) -- (3/2) -- (4/3) -- (5/3) -- (6/4) -- (7/5);
\draw[ultra thick] (0/2) -- (1/2) -- (2/2) -- (3/1) -- (4/1) -- (5/1) -- (6/1) -- (7/1);
\draw[ultra thick] (0/3) -- (1/3) -- (2/3) -- (3/3) -- (4/2) -- (5/2) -- (6/2) -- (7/2);
\draw[ultra thick] (0/4) -- (1/4) -- (2/5) -- (3/5) -- (4/5) -- (5/6) -- (6/6) -- (7/6);
\draw[ultra thick] (0/5) -- (1/5) -- (2/4) -- (3/4) -- (4/4) -- (5/4) -- (6/3) -- (7/3);
\draw[ultra thick] (0/6) -- (1/7) -- (2/7) -- (3/7) -- (4/7) -- (5/7) -- (6/7) -- (7/7);
\draw[ultra thick] (0/7) -- (1/6) -- (2/6) -- (3/6) -- (4/6) -- (5/5) -- (6/5) -- (7/4);

\epic
&
\pic

\draw (3,0) -- (0,0) -- (0,-4);
\draw (0,-1) -- (1,-1) -- (1,0);
\node at (0.5,-0.5) (1/1) {1};
\draw (1,-1) -- (2,-1) -- (2,0);
\node at (1.5,-0.5) (2/1) {3};
\draw (2,-1) -- (3,-1) -- (3,0);
\node at (2.5,-0.5) (3/1) {7};
\draw (0,-2) -- (1,-2) -- (1,-1);
\node at (0.5,-1.5) (1/2) {2};
\draw (1,-2) -- (2,-2) -- (2,-1);
\node at (1.5,-1.5) (2/2) {6};
\draw (0,-3) -- (1,-3) -- (1,-2);
\node at (0.5,-2.5) (1/3) {4};
\draw (0,-4) -- (1,-4) -- (1,-3);
\node at (0.5,-3.5) (1/4) {5};

\node at (.5,.5) {1};
\node at (1.5,.5) {4};
\node at (2.5,.5) {6};

\node at (-.5,-.5) {7};
\node at (-.5,-1.5) {5};
\node at (-.5,-2.5) {3};
\node at (-.5,-3.5) {2};

\epic\\
Wiring diagram for $w{\uparrow_7 \uparrow_7}$ & $\text{Tab}(w{\uparrow_7 \uparrow_7})=\text{LS}(w)$
\end{tabular}
\end{figure}

%Should these be theorems?

\subsection{Grassmannian permutations}
\label{ssec:grassmannian}

Recall a permutation $\sigma$ is Grassmannian if it has exactly one descent. 
We can then write 
\[
\sigma = a_1a_2\dots a_{k}b_1b_2\dots b_{n-k}
\]
where $\{a_i\}^{k}_{i=1}$ and $\{b_j\}^{n-k}_{j=1}$ are increasing sequences with $a_k > b_1$. 
A word $w$ is \emph{Grassmannian} if it is the reduced word of a Grassmannian permutation.
From the Grassmannian word $w = w_1 \dots w_m$ we construct a tableau $\Tab(w)$ as follows.
Index the columns of $\Tab(w)$ by $b_1, \dots, b_{n-k}$ and the rows by $a_{k}, a_{k-1}, \dots, a_1$.
Since all inversions in $\sigma$ feature an $a_i$ and a $b_j$, each $w_l$ in $w$ represents the swap between an $a_i$ and a $b_j$.
For $w_l$, we enter $m+1-l$ in the column indexed by $a_i$ and $b_j$.
If $a_i$ swaps with $b_j$, we see it must later swap with each smaller $b$.
This shows entries are increasing across rows.
Likewise, if $b_j$ swaps with $a_i$, it must later swap with each larger $a$ so entries increase down columns.
From this, we can conclude that $\Tab(w)$ is a standard Young tableau whose shape is determined by $\sigma$.
There is an example of $\Tab$ in Figure~\ref{fig:little example}.
For a given Grassmannian permutation $\sigma$, this map is a bijection as the process is easily reversed.
Multiple Grassmannian permutations may correspond to the same shape.
However, they will only differ by some fixed points at the beginning and end of the permutation.

\subsection{Little bumps and the Little map}
\label{ssec:little} 

We now describe the method in~\cite{little2003combinatorial} for transforming an arbitrary reduced word into the reduced word of a Grassmannian permutation.   This method is implemented in an easy-to-use Java applet~\cite{littleapplet} and we strongly recommend that the reader follow along in our descriptions using this this software, if possible.
Let $w = w_1 \dots w_m$ be a reduced word and $w^{(i)} = w_1 \dots w_{i-1}w_{i+1}\dots w_m$.
We construct 
\[
w^{(i-)} = 
\begin{cases} w_1 \dots w_{i-1}(w_i-1)w_{i+1}\dots w_m &  \mbox{if } w_i > 1\\
(w_1+1) \dots (w_{i-1}+1)w_i(w_{i+1}+1)\dots (w_m +1) &  \mbox{if } w_i = 1 \end{cases}
\]
by decrementing $w_i$ by one or incrementing each other entry if $w_i =1$.

Let $w$ be a reduced word such that $w^{(i)}$ is also reduced.
Note $w^{(i-)}$ may not be reduced, as $w_i -1$ may swap the same values as some $w_j$ with $j\neq i$.
However, this is the only way $w^{(i)-}$ can fail to be reduced as $w^{(i)}$ is reduced and we have added only one additional swap.
Removing $w_j$ from $w^{(i-)}$, we obtain a new reduced word $w^{(i-)(j)}$.
Repeating this process of decrementation, we can construct $w^{(i-)(j-)}$ and so on until we are left with a reduced word $v = v_1 \dots v_m$.
We refer to this process as a  \emph{Little bump} beginning at position $i$ and say $v = w{\uparrow_i}$, where $i$ is the initial index the bump was started at.
To see that this process terminates, we refer to the following lemma.
\begin{lemma}[Lemma 5,~\cite{little2003combinatorial}]
\label{lem:decrement once}
Let $w$ be a reduced word such that $w^{(i)}$ is reduced. Let $i_1, i_2 , \dots$ be the sequence of indices decremented in $w{\uparrow_{i}}$. Then the entries of $i_1, i_2, \dots$ are unique.
\end{lemma}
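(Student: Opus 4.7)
The plan is to track the value at each decremented position and show these values form a strictly decreasing sequence of integers. Let $v_k$ denote the value at position $i_k$ in the current word just before step $k$ of the bump. I would establish the local claim that $v_{k+1} = v_k - 1$ and $i_{k+1} \neq i_k$ whenever a step $k+1$ occurs; from this, distinctness of all the $i_k$ follows by a short contradiction.

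The local claim rests on the observation already made in the paragraph preceding the lemma: the only way decrementing a single letter in a reduced word can produce non-reducedness is for the new letter to duplicate some other adjacent transposition present in the sequence. That duplicated transposition lives at a unique position $i_{k+1}$, necessarily distinct from $i_k$, and whose value must equal the new value $v_k - 1$ at position $i_k$. Hence $v_{k+1} = v_k - 1$.

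For the main argument, suppose $i_k = i_\ell$ with $k < \ell$ chosen minimally. Between steps $k+1$ and $\ell - 1$ the position $i_k$ is not touched, so its current value remains $v_k - 1$; hence $v_\ell = v_k - 1$. On the other hand, iterating $v_{j+1} = v_j - 1$ gives $v_\ell = v_k - (\ell - k)$. Combining forces $\ell = k+1$, and therefore $i_{k+1} = i_k$, which directly contradicts the local claim.

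The one wrinkle is the exceptional ``$v_k = 1$'' case of the decrementation rule, in which all other positions are incremented rather than the current letter going to $0$. Here position $i_k$ now carries the unique value $1$, whose associated transposition $(1,2)$ cannot conflict with any other letter (whose values are all at least $3$); the resulting word is automatically reduced, the bump terminates at step $k$, and no subsequent step can generate a repeated index. The main obstacle is justifying the local claim cleanly, particularly that non-reducedness produces exactly one conflicting position at the predicted value. A wiring-diagram picture makes this transparent: decrementing $u^{(k)}_{i_k}$ pushes the corresponding crossing down one level, and the only way to spoil reducedness is a collision with an existing crossing between the same two strands, which by reducedness of the rest of the word must lie at the new level $v_k - 1$.
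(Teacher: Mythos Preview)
Your local claim that $v_{k+1} = v_k - 1$ is false, and the entire argument rests on it. When decrementing the letter at position $i_k$ destroys reducedness, the obstruction is \emph{not} that the new letter value $v_k - 1$ appears elsewhere in the word; it is that the pair of wires now crossing at position $i_k$ already cross at some other position $i_{k+1}$. Those two wires may meet at a completely different height, so the letter at $i_{k+1}$ need not equal $v_k - 1$. Your closing wiring-diagram sentence repeats the same confusion: reducedness of the rest of the word in no way forces the second crossing of a given pair of strands to lie at level $v_k - 1$. A concrete counterexample: take $w = 1213$ and begin the bump at $i_1 = 4$ (here $w^{(4)} = 121$ is reduced and $v_1 = 3$). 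Decrementing yields $1212$, which is not reduced because wires $1$ and $2$ cross both at position $1$ (height $1$) and at position $4$ (height $2$); hence $i_2 = 1$ and $v_2 = 1 \neq 2 = v_1 - 1$. With the local claim gone, your contradiction $v_\ell = v_k - 1 = v_k - (\ell - k)$ disappears as well.

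The paper does not supply its own proof of this lemma; it is quoted from Little's article. The essential correction is to replace ``duplicated letter value'' by ``duplicated crossing of the same pair of wires'' throughout. Once you do this, no simple monotone scalar like your $v_k$ is available, and the distinctness of the $i_k$ has to be argued by tracking strands through the wiring diagram rather than by comparing heights.
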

Since $w$ is finite, we see the process terminates so that $w{\uparrow_{i}}$ is well-defined.
We highlight a property of Little bumps observed in~\cite{little2003combinatorial}, that they preserve the descent structure of $w$.
\begin{corollary}
\label{cor:descent structure}
Let $w = w_1 \dots w_m$ and $v = v_1 \dots v_m$ be a reduced words and $\uparrow$ be a Little bump such that $v = w{\uparrow}$.
Then $v_i > v_{i+1}$ if and only if $w_i > w_{i+1}$ for all $i$.
\end{corollary}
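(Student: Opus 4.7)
The plan is to compare $v = w{\uparrow}$ directly to $w$ at each adjacent pair $(p, p+1)$, leveraging the fact that both $w$ and $v$ are reduced words.

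First I would analyze the effect of the bump on individual entries. By Lemma~\ref{lem:decrement once}, each position is processed at most once during the bump. I would also verify that at most one global shift occurs per bump: once a global shift happens at some position $g$ (necessarily one with current value $1$), every other entry is incremented by $1$, so no subsequent processed position can attain the value $1$ needed to trigger another global shift.

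In the absence of any global shift, each $v_j$ equals either $w_j$ or $w_j - 1$, so for every pair $(p, p+1)$ the quantity
\[
(v_p - v_{p+1}) - (w_p - w_{p+1})
\]
lies in $\{-1, 0, 1\}$. Because $w$ is reduced, $w_p \ne w_{p+1}$; and because $v$ is reduced by construction of the Little map, $v_p \ne v_{p+1}$ as well. Two nonzero integers that differ by at most $1$ must share the same sign, so $v_p > v_{p+1}$ if and only if $w_p > w_{p+1}$, as required.

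If a global shift at position $g$ does occur, then $v_g = 1 = w_g$ while $v_j \in \{w_j, w_j + 1\}$ for $j \ne g$. For pairs $(p, p+1)$ with both endpoints distinct from $g$, the preceding sign argument applies without change, since again $(v_p - v_{p+1}) - (w_p - w_{p+1}) \in \{-1, 0, 1\}$ and both differences are nonzero. For the two pairs adjacent to $g$, the reducedness of $w$ combined with $w_g = 1$ forces $w_{g\pm 1} \ge 2$, which provides enough slack to verify directly that the descent status is preserved: at $(g-1, g)$ both $w_{g-1} > w_g$ and $v_{g-1} \ge 2 > 1 = v_g$ hold, and at $(g, g+1)$ neither inequality holds.

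The main technical hurdle is the bookkeeping around the global shift---particularly confirming there is at most one such shift per bump and that it interacts cleanly with the decrements. Once that structural point is in hand, the remainder reduces to the clean sign analysis above, in which reducedness of both $w$ and $v$ does all the work.
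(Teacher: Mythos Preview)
Your argument is correct and follows essentially the same route as the paper: use Lemma~\ref{lem:decrement once} to bound the change in each entry by one, then invoke reducedness of both $w$ and $v$ to conclude that the nonzero difference $w_p - w_{p+1}$ cannot change sign. The paper's two-line proof glosses over the global-shift case that you handle explicitly (implicitly treating a shift as a single decrement followed by a uniform relabeling, which preserves all relative orders), but the core idea is identical.
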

\begin{proof}
Let $w_i > w_{i+1}$. As each $w_i$ is decremented at most once, we see $v_i \geq v_{i+1}$, but $v_i \neq v_{i+1}$. Thus $v_i > v_{i+1}$. By the same reasoning, if $w_i < w_{i+1}$, we see $v_i<v_{i+1}$.
\end{proof}

Let $w$ be a reduced word of $\sigma \in S_n$.
We define the Little map $\LS(w)$.
\begin{enumerate}
\item If $w$ is a Grassmannian word, then $\LS(w) = \Tab(w)$
\item If $w$ is not a Grassmannian word, identify the swap location $i$ of the last inversion (lexicographically) in $\sigma$ and output $\LS(w{\uparrow_i})$.
\end{enumerate}
It is a result from~\cite{little2003combinatorial} extending work in~\cite{lascoux1985schubert} that $\LS$ terminates.
We then see that $w \mapsto \LS(w)$ where $\LS(w)$ is a standard Young tableau.
An example can be seen in Figure \ref{fig:little example}, where the word $w$ is represented by its \emph{wiring diagram}: an arrangement of horizontal, parallel wires spaced one unit apart, labelled $1$ through $n$ on the left-hand side, in which each letter in the word $w$ is represented by crossings of wires.

\section{The action of Coxeter-Knuth moves}
\label{sec:ck moves}

\subsection{Basics of Coxeter-Knuth moves}
\label{ssec:basics of ck}

First introduced in~\cite{edelman1987balanced}, Coxeter-Knuth moves are perhaps the most important tool for studying Edelman-Greene insertion.
They are modifications of the second and third Coxeter relations.
Let $a<b<c$ and $x$ be integers. The three \emph{Coxeter-Knuth moves} are the modifications
\begin{enumerate}
\item $acb \leftrightarrow cab$
\item $bac \leftrightarrow bca$
\item $x(x+1)x \leftrightarrow (x+1)x(x+1)$
\end{enumerate}
applied to three consecutive entries of a reduced word.
Let $w = w_1 w_2 \dots w_m$ be a reduced word of $\sigma$ and $\alpha$ denote a Coxeter-Knuth move on the entries $w_{i-1} w_i w_{i+1}$.
Since $a<b<c$,  if $\alpha$ is of type one or two we have $w\alpha$ a reduced word of $\sigma$ as well by the second Coxeter relation.
If $\alpha$ is of type three then $w\alpha$ is a reduced word of $\sigma$ by the third Coxeter relation. 
We say two reduced words $v$ and $w$ are \emph{Coxeter-Knuth equivalent} if there exists a sequence $\alpha_1, \alpha_2, \dots, \alpha_k$ of Coxeter-Knuth moves such that 
\[
v = w\alpha_1\dots\alpha_k.
\]
Note that two Coxeter-Knuth equivalent reduced words must correspond to reduced decompositions of the same permutation.
We can see their action on wiring diagrams in Figure \ref{fig:ck moves}.
\begin{figure}
\caption{The three types of Coxeter-Knuth moves acting on wiring diagrams.
\label{fig:ck moves}}
\vspace{10pt}
\begin{tabular}{ccc}
\rule{0.25in}{0in}
\pic[scale =.25]

\draw (2,0) -- (3,1);
\draw (2,1) -- (3,0);

\draw (1,3) -- (2,4);
\draw  (1,4) -- (2,3);

\draw (0,-3) -- (1,-2);
\draw (0,-2) -- (1,-3);

\draw[thick, <->] (4,.5) -- (6,.5);

\draw (9,0) -- (10,1);
\draw (9,1) -- (10,0);

\draw (7,3) -- (8,4);
\draw  (7,4) -- (8,3);

\draw (8,-3) -- (9,-2);
\draw (8,-2) -- (9,-3);

\epic 
\rule{0.25in}{0in}
&
\rule{0.25in}{0in}
\pic[scale =.25]

\draw (0,0) -- (1,1);
\draw (0,1) -- (1,0);

\draw (1,3) -- (2,4);
\draw  (1,4) -- (2,3);

\draw (2,-3) -- (3,-2);
\draw (2,-2) -- (3,-3);

\draw[thick, <->] (4,.5) -- (6,.5);

\draw (7,0) -- (8,1);
\draw (7,1) -- (8,0);

\draw (9,3) -- (10,4);
\draw  (9,4) -- (10,3);

\draw (8,-3) -- (9,-2);
\draw (8,-2) -- (9,-3);

\epic 
\rule{0.25in}{0in}
&
\rule{0.25in}{0in}
\pic[scale =.25]

\node at (0,-2.5) {};

\draw (0,0) -- (1,1);
\draw (0,1) -- (1,0);

\draw (1,1) -- (2,2);
\draw  (1,2) -- (2,1);

\draw (2,0) -- (3,1);
\draw (2,1) -- (3,0);

\draw[thick, <->] (4,.5) -- (6,.5);

\draw (7,1) -- (8,2);
\draw (7,2) -- (8,1);

\draw (9,1) -- (10,2);
\draw  (9,2) -- (10,1);

\draw (8,0) -- (9,1);
\draw (8,1) -- (9,0);

\epic
\rule{0.25in}{0in}
\\ (a) Type 1 & (b) Type 2 & (c) Type 3
\end{tabular}
\end{figure}

Coxeter-Knuth moves play a role in the study of Edelman-Greene insertion analogous to that of Knuth moves in the study of RSK insertion.
\begin{theorem}[Theorem 6.24 in~\cite{edelman1987balanced}]
\label{thm:ck fix p}
Let $v$ and $w$ be a reduced words. 
Then $P(v)= P(w)$ if and only if $v$ and $w$ are Coxeter-Knuth equivalent.
\end{theorem}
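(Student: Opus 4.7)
The plan is to prove the two implications separately, following the template of Knuth's original equivalence theorem for RSK but modified to accommodate the special bumping rule 2(a) that distinguishes $\EG$ from ordinary insertion.

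For the forward direction, I would show that each of the three Coxeter-Knuth moves preserves the insertion tableau. A move acts on three consecutive positions $i-1, i, i+1$, and because Edelman-Greene insertion proceeds from right to left, the suffix $w_{i+2}\dots w_m$ produces a common intermediate tableau $T$ before any of the moved letters is touched, while the prefix $w_1\dots w_{i-2}$ is inserted identically afterwards. The problem thus reduces to a local check: for an arbitrary row-and-column-strict $T$, verify that inserting the three letters on each side of each move into $T$ yields the same tableau. Types 1 and 2 are a finite case analysis along the lines of Knuth's original argument, with attention to when a special bump can intervene along the bumping path. Type 3 is the interesting case: the special rule 2(a) is precisely what equates the two sides, with one sequence triggering a special bump and the other triggering an ordinary bump that leaves the same footprint on $T$.

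For the reverse direction, I would introduce a canonical reduced word $w_P$ attached to each admissible $P$ with $P(w_P) = P$, and show that every reduced word $w$ is CK-equivalent to $w_{P(w)}$; the column reading word, singled out in the introduction as cooperating well with both $\EG$ and the Little map, is the natural candidate. Granting that canonical-form statement, if $P(v)=P(w)$ then $v$ and $w$ are both CK-equivalent to the same $w_P$ and hence to each other. The canonical-form statement itself can be proved by induction on word length: remove the first letter $w_1$ (which is the last to be inserted), apply the inductive hypothesis to $w_2\dots w_m$ to bring it to the canonical word of $P(w_2\dots w_m)$, and then use CK moves to transport $w_1$ into its proper place in the canonical word of $P(w)$, using the reverse insertion path of $w_1$ as a combinatorial guide.

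The principal obstacle is this last transport step. Translating a single reverse-insertion path, possibly involving special bumps, into an explicit sequence of Coxeter-Knuth moves requires a careful case analysis, and one must check that no intermediate word leaves the set of reduced expressions, since CK moves are only guaranteed to preserve the underlying permutation as long as the word remains reduced. The special bump 2(a) plays a critical role here as well, forcing type 3 moves to appear in ways they would not in the RSK setting, and the bookkeeping that relates bumping paths to type 3 moves is where the bulk of the work concentrates.
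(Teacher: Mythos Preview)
This theorem is not proved in the present paper: it is quoted verbatim as Theorem~6.24 of Edelman--Greene and used as a black box throughout (for instance in Definition~\ref{def:column word}, where the fact that $w$ and $\tau(w)$ are Coxeter--Knuth equivalent is deduced from $P(\tau(w))=P(w)$ via this theorem). So there is no proof here to compare your proposal against.

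That said, your outline is a faithful sketch of how Edelman and Greene actually establish the result in their paper, and the forward-direction case analysis you describe is essentially the same one that appears here in the proof of Lemma~\ref{lem:ck on q} (which is explicitly ``adapted from the proof for Theorem~6.24''), repurposed to track the effect on $Q$ rather than to show $P$ is fixed. Your identification of the column reading word as the canonical form for the reverse direction is also on target and matches the role it plays later in Section~\ref{ssec:column word}. The one place your plan is vaguer than it should be is the transport step: in Edelman--Greene the argument that any reduced word is CK-equivalent to the reading word of its insertion tableau is not literally organized as ``follow the reverse bumping path with CK moves,'' and making that precise---especially tracking how special bumps force type~3 moves while keeping every intermediate word reduced---is nontrivial. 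If you intend to reconstruct the proof rather than cite it, that is where you should expect the real work to lie.
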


\subsection{The action of Coxeter-Knuth moves on $Q(w)$}
\label{ssec:ck on q}

In order to understand the relationships of Coxeter-Knuth moves and Little bumps, we must first understand in greater detail how Coxeter-Knuth moves relate to Edelman-Greene insertion.
From Theorem \ref{thm:ck fix p}, we understand how Coxeter-Knuth moves relate to $P(w)$.
We must also understand their action on $Q(w)$.
For $T$ a standard Young tableau with $n$ entries, let $Tt_{i,j}$ be the Young tableau obtained by swapping the entries labeled $n-i$ and $n-j$.
\begin{lemma}
\label{lem:ck on q}
Let $w = w_1 \dots w_m$ be a reduced word and $\alpha$ be a Coxeter-Knuth move on $w_{i-1}w_iw_{i+1}$. 
If $\alpha$ is a Coxeter-Knuth move of type one or three, then
\[
Q(w\alpha) = Q(w)t_{i-1,i}.
\]
If $\alpha$ is a Coxeter-Knuth move of type two, then $\alpha$ acts on $Q(w)$ as above or
\[
Q(w\alpha) = Q(w)t_{i-1,i} \ \ \ \ \  \text{or} \ \ \ \ \ Q(w\alpha) = Q(w)t_{i,i+1}.
\]
\end{lemma}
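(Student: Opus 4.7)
The plan is to exploit the locality of a Coxeter-Knuth move together with Theorem~\ref{thm:ck fix p}, reducing first to the analysis of three insertions and then verifying the claim by a case-by-case examination of the three Coxeter-Knuth types.

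For the reduction, I would argue as follows. Since $w$ and $w\alpha$ agree outside positions $i-1,i,i+1$, the right-to-left Edelman-Greene insertion begins with the common letters $w_m,\dots,w_{i+2}$, producing the same intermediate tableau $T$ and the same sequence of added cells; hence the labels $1,\dots,m-i-1$ occupy identical cells of $Q(w)$ and $Q(w\alpha)$. For the later steps, I would apply Theorem~\ref{thm:ck fix p} to the suffix $v=w_{i-1}w_i\cdots w_m$, which is reduced (being a suffix of a reduced word) and is Coxeter-Knuth equivalent to the analogous suffix of $w\alpha$ via the move $\alpha$ itself. This yields $P(v)=P(v\alpha)$, meaning that the intermediate tableau $T'$ obtained after $m-i+2$ right-to-left insertions is common to $w$ and $w\alpha$. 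The remaining insertions of $w_{i-2},\dots,w_1$ then proceed identically from $T'$, so the labels $m-i+3,\dots,m$ also occupy identical cells. Consequently $Q(w)$ and $Q(w\alpha)$ can differ only in how the three labels $m-i,\,m-i+1,\,m-i+2$ are distributed among the three cells that lie in $T'$ but not in $T$.

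With this reduction in hand, I would examine the three middle insertions case-by-case. For a type~1 move $acb\leftrightarrow cab$ (with $a<b<c$), the first right-to-left insertion is $b$ in both sequences and adds the same cell; using the inequalities $a<b<c$ to pin down the rows into which $a$ and $c$ subsequently bump, one checks that the remaining two cells are added in opposite orders, producing the transposition $t_{i-1,i}$. For a type~3 move $x(x+1)x\leftrightarrow(x+1)x(x+1)$, a direct computation in each configuration of $x$ and $x+1$ within $T$ --- carefully tracking the special bumping rule~$2(a)$ --- shows that again the same pair of labels is swapped. For a type~2 move $bac\leftrightarrow bca$, the last right-to-left insertion of $b$ is common as a letter but is applied to different intermediate tableaux, so a dichotomy emerges: if the bumping path of $a$ in $T$ intersects the cell added by $c$, the swap is $t_{i-1,i}$; otherwise it is $t_{i,i+1}$.

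I expect the main obstacle to be the type~3 case, because the special bumping rule~$2(a)$ can redirect the insertion path from row $r$ to row $r+1$ in a non-local way that must be tracked consistently in both directions of the Coxeter-Knuth equivalence, with several sub-configurations to verify. A secondary difficulty is formulating the type~2 dichotomy cleanly; I would approach this by running small examples (in the spirit of Figure~\ref{fig:eg example}) to guess the correct criterion and then confirm it via direct bumping-path analysis.
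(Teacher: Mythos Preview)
Your reduction is exactly the one the paper uses: agreement of the common suffix $w_m,\dots,w_{i+2}$, then Theorem~\ref{thm:ck fix p} applied to $w|_{i-1}$ to pin down the common $P$-tableau after the triple, so only the three middle labels can move. The case analysis is also in the same spirit, but the paper organizes it more efficiently in two ways you should adopt. First, for type~1 the paper does not track the bumping of $a$ and $c$ at all: since the rightmost letter $b$ is common, the corresponding label is fixed, and since $P(w)=P(w\alpha)$ while $w\ne w\alpha$ forces $Q(w)\ne Q(w\alpha)$, the only remaining possibility is the single transposition $t_{i-1,i}$. Second, and more importantly, the paper does not attempt a global bumping-path analysis for types~2 and~3; instead it proves a one-row invariance: when the triple is inserted into a single row, the three letters bumped out are again related by a Coxeter-Knuth move (type~3 stays type~3, type~2 stays type~2 or degenerates to type~3), so one inducts row by row until something fails to bump. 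This is precisely the structural observation that tames the special-bump bookkeeping you flag as the main obstacle, and it is why the paper treats type~3 before type~2. Your proposed ``dichotomy'' for type~2 (whether the bumping path of $a$ meets the cell added by $c$) is roughly the right intuition for the terminal step of that induction, but without the row-by-row reduction it is not clear how you would verify it in a tableau of arbitrary depth.
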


\begin{proof}
For $w = w_1 \dots w_m$ a reduced word we see $w|_{i-1} := w_{i-1}w_i \dots w_m$ is also a reduced word. 
Let $\alpha$ be a Coxeter-Knuth move on $w_{i-1}w_iw_{i+1}$. 
By Theorem~\ref{thm:ck fix p} we see
\[
P(w|_{i-1})   = P(w|_{i-1}\alpha) = P(w\alpha|_{i-1})
\]
as they differ by a Coxeter-Knuth move.
Since $w_1 \dots w_{i-2}$ remain unmodified, their insertion is unchanged.
Additionally, as $w\mid_{i+2} = w\alpha|_{i+2}$ we see $Q(w|_{i+2}) = Q(w\alpha|_{i+2})$, so changes in $Q(w)$ can only occur at the entries labeled $i{-}1,\  i$ and $i{+}1$.
The remainder of this argument is adapted from the proof for Theorem 6.24 in~\cite{edelman1987balanced}.
The strategy of proof is to analyze the insertion of the triplet $w_{i-1}w_iw_{i+1}$ and its counterpart in $w\alpha$ into one row of $P(w|_{i+2})$.
If one such $w_j$ fails to bump anything, the analysis is straightforward.
Otherwise the three entries bumped by each will continue to differ by a Coxeter-Knuth move, allowing us to reduce the problem to the previous case.

\begin{enumerate}
\item 
In this case, we do not need to complete the full analysis described above.
Let $\alpha$ be a Coxeter-Knuth move of type one. 
Then $w_{i+1}$ inserts into the same spot in $P(w|_{i+2})$ for both $w$ and $w\alpha$.
Since $Q(w) \neq Q(w\alpha)$, we see $Q(w\alpha) = Q(w)t_{i-1,i}$.

\item Let $\alpha$ be a Coxeter-Knuth move of type three. 
This case is treated now as the case where $\alpha$ is a move of type two depends on it. 
We compare the insertion of $x(x+1)x$ and $(x+1)x(x+1)$ into the same row of $P(w|_{i+2})$. 
Assume both $x$ and $x+1$ bump an entry of the row. 
Let $p$ denote the entry bumped by $x$, $\epsilon_1$ be the entry preceding $p$ and $\epsilon_2$ be the entry following $p$. 
If $p > x+1$, we see $x$ and $x+1$ are inserted into the same position, so $Q(w\alpha) = Q(w)t_{i-1,i}$ as in the first case. 

Let $p = x+1$. 
Since $w|_{i+2}$ is reduced, $\epsilon_2 = x+2$ (otherwise, inserting $x+1$ first would leave consecutive occurrences of $x+1$). 
There are two remaining possibilities: $\epsilon_1<x$ or $\epsilon_1=x$. 
Let $\epsilon_1<x$.
Upon inserting $x(x+1)x$ into the row, we see the first $x$ bumps $x+1$, $x+1$ bumps $x+2$ and the second $x$ bumps the $x+1$ just inserted, so that $(x+1)(x+2)(x+1)$ is inserted into the next row. 
Upon inserting $(x+1)x(x+1)$ into the row, we see the first $x+1$ produces a special bump of $x+2$, the $x$ bumps $x+1$ and the second $x+1$ bumps the $x+2$ remaining after the special bump, so that $(x+2)(x+1)(x+2)$ is inserted into the next row. 
The case where $\epsilon_1 =x$ is simpler.
Every bump is a special bump, so that $p$ and $\epsilon_2$ are unchanged throughout the insertion process.
Each $x$ and $x+1$ will bump an entry precisely one larger, so that the entries to be inserted into the next row will be $(x+1)(x+2)(x+1)$ and $(x+2)(x+1)(x+2)$ respectively.
In both cases, we are left with a Coxeter-Knuth move of type three.

If one of the three inserted letters does not bump an entry of the row, we see the largest entry $k$ of the row must be less than $x+1$. 
As $P(w\alpha |_{i+1})$ is row and column strict, we see $k < x$, so $x$ or $x+1$ would both insert at the end of the row. 
Thus $Q(w\alpha) = Q(w) t_{i-1,i}$.

\item Let $\alpha$ be a Coxeter-Knuth move of type two. 
For $a<b<c$, we compare the insertion of $bca$ and $bac$ into a row of $P(w|_{i+2})$ bumping $pqr$ and $p'q'r'$ respectively.
If $p=p'$, we see $Q(w \alpha) = Q(w)t_{i-1,i}$ as as in the first case. 
Assume $p \neq p'$. 
One can then check that $a$ and $c$ bump the same entries regardless of order, so that $p = q'$ and $q = p'$.
If $b$ bumps the same entry in each case, it is straightforward to see that $pqr$ and $p'q'r'$ differ by a Coxeter-Knuth move of type two.
The only way this does not occur is if $p = q + 1$ (so they are next to each other) with $c = q$.
In this case, upon inserting $bca$ we obtain $q(q+1)q$, while the insertion of $bac$ produces $(q+1)q(q+1)$ as the bump of $q+1$ by $c$ is special.
Therefore, we are left with another Coxeter-Knuth move of type two or one of type three.
In the latter case, we see $Q(w \alpha) = Q(w)t_{i-1,i}$ by the second case.

If some letter does not bump an entry of the row, there are two possibilities. 
Let $k$ be the largest entry of the row. If $k < a$, then $a$ and $c$ would insert into the same position, so $Q(w\alpha) = Q(w)t_{i-1,i}$. 
If $a < k <c$, then $c$ inserts on the end of the row and $a$ bumps the same entry $x$ of the row regardless of the order of insertion.
Since $x$ is the only entry bumped, we see $P(w|_i) = P(w\alpha|_i)$. 
Therefore, $Q(w\alpha) = Q(w)t_{i,i+1}$.

\end{enumerate}

\end{proof}

\subsection{Coxeter-Knuth moves and Little bumps}

We now set out to show that Coxeter-Knuth moves commute with Little bumps.
This requires two results.
The first  is that the order we perform a Coxeter-Knuth move $\alpha$ and a Little bump $\uparrow$ does not affect the resulting reduced word.
\begin{lemma}
\label{lem:ck little commute}
Let $w = w_1\dots w_m$ be a reduced word, $\alpha$ a Coxeter-Knuth move on $w_{i-1}w_iw_{i+1}$, and $\uparrow_{j,k}$ be a Little bump begun at the swap between the $j$ and $k$th trajectories.
Then
\[
(w\alpha){\uparrow_{j,k}} = (w{\uparrow_{j,k}})\alpha.
\]
\end{lemma}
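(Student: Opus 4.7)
The plan is to proceed by a local analysis of how the Little bump $\uparrow_{j,k}$ interacts with the three consecutive positions $w_{i-1}, w_i, w_{i+1}$ on which $\alpha$ acts. Let $i_1, i_2, \ldots, i_r$ denote the sequence of positions decremented by $\uparrow_{j,k}$ applied to $w$; by Lemma \ref{lem:decrement once} these positions are distinct, and Corollary \ref{cor:descent structure} shows that the descent structure is preserved throughout. The key structural fact is that the bump progresses locally: after decrementing position $i_s$, the next position $i_{s+1}$ is forced by a single adjacent conflict, determined by the values at neighboring positions.

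If none of $i-1, i, i+1$ appears in $\{i_1, \ldots, i_r\}$, then the bump's trajectory depends only on values outside the region modified by $\alpha$, and $w$ and $w\alpha$ agree on that complement. The same sequence of decrementations occurs in both, and the identity $(w\alpha)\uparrow_{j,k} = (w\uparrow_{j,k})\alpha$ is immediate.

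When the bump's trajectory does enter $\{i-1, i, i+1\}$, I would split by the type of $\alpha$. For types 1 and 2 (commutation moves $acb \leftrightarrow cab$ or $bac \leftrightarrow bca$, with $a < b < c$), the outer entries $a$ and $c$ differ by at least $2$ and hence correspond to disjoint pairs of wires; the bump can modify at most one of $\{a, c\}$ in a given pass, and its subsequent propagation is independent of the untouched entry. A short sub-case analysis, organized by which of $a, b, c$ is decremented and at which of positions $i-1, i, i+1$ the bump enters or exits, verifies the equality in each configuration. For type 3 moves $x(x+1)x \leftrightarrow (x+1)x(x+1)$, all three entries share wires and form a braid triangle; here I would inspect the wiring diagram directly to show that the bump threading through the triangle on one side produces the same exit position and value, and hence the same continuation, as on the other side. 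The exceptional case where the bump reaches a position with value $1$ and must increment the remaining letters rather than decrement is handled by a symmetric shifted argument.

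The main obstacle is the type 3 analysis: the braid relation rearranges three crossings that all share wires, so the bump's local decision at any of these positions depends on the full configuration of the triangle. One must carefully track how the bump enters and exits, recognizing when it triggers behavior analogous to the special bumps of Lemma \ref{lem:ck on q} (where the bump's trajectory is forced to pass through a position indirectly). Once this triangle analysis is settled, the remaining cases assemble routinely, and commutativity follows in general.
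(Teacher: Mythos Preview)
Your plan is essentially the paper's approach: a local case analysis, organized by the type of the Coxeter--Knuth move, tracking how the bump passes through the three affected positions and verifying that the same word results. So the strategy is sound.

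There is, however, a key phenomenon you have not named, and without it your ``short sub-case analysis'' will not close. The symbol $\alpha$ in the statement stands for a Coxeter--Knuth move on positions $i-1,i,i+1$, but it need \emph{not} be of the same type on the two sides of the identity. Concretely: if $\alpha$ is of type~1 or~2 and the three values differ by exactly one (say $b=a+1$, $c=a+2$), then when the bump decrements the smallest entry, the resulting triple is $x(x{+}1)x$ or $(x{+}1)x(x{+}1)$, and the move on the bumped side is of type~3, not type~1 or~2. Conversely, starting from a type~3 configuration, if the bump decrements only one or two of the three crossings, the resulting triple is no longer a braid triangle and the matching move is of type~1 or~2. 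The paper handles exactly these ``transitional'' cases pictorially (its Figures~3 and~4); they are the real content of the lemma. Your discussion of types~1 and~2 reads as if the same commutation move survives the bump, which is false in the boundary case.

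Relatedly, the claim that ``the bump can modify at most one of $\{a,c\}$ in a given pass'' is not the right organizing principle and is not obviously true as stated; the bump may well decrement several of the three positions. What you actually need is: (i) after the bump has left the window $\{i-1,i,i+1\}$, the last decremented crossing lies between the same pair of trajectories regardless of whether $\alpha$ was applied first, so the continuation of the bump agrees; and (ii) the two resulting triples differ by \emph{some} Coxeter--Knuth move on the same positions. Once you isolate the transitional cases and check (i) and (ii) for each, the argument goes through; the paper also notes that the bump may revisit the window a second time, in which case all three crossings end up decremented and the analysis is again symmetric.
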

Note that $\alpha$ is used here to represent two Coxeter-Knuth moves, possibly of different types, on the same indices.

\begin{proof}
Let $v = w {\uparrow_{j,k}}$  and $v' = (w\alpha){\uparrow_{j,k}}$.
Recall from Lemma \ref{lem:decrement once} and Corollary \ref{cor:descent structure} that $w_j-v_j\in\{0,1\}$ and $v$ has the same descent structure of $w$.

\begin{enumerate}

\item Let $\alpha$ be a Coxeter-Knuth move of the first type, i.e. $w_{i-1} w_i w_{i+1} \mapsto w_i w_{i-1} w_{i+1}$ with $w_{i+1}$ strictly between $w_{i-1}$ and $w_i$. 
Since a Little bump decrements an entry of $w$ by at most one, one can check that if $w_{i+1}$ differs from $w_i$ or $w_{i-1}$ by more than one, we can perform a Coxeter-Knuth move of type one on $v_{i-1}v_iv_{i+1}$.
In the event that they differ by exactly one and the smallest entry is decremented, we see in Figure~\ref{fig:transitional bumps 1} that after the bump they differ by a Coxeter-Knuth move of the third type.

\item  Let $\alpha$ be a Coxeter-Knuth move of the second type, i.e. $w_{i-1} w_i w_{i+1} \mapsto w_{i-1} w_{i+1} w_i$ with $w_{i-1}$ strictly between $w_{i+1}$ and $w_i$. 
Since a Little bump decrements an entry of $w$ by at most one, one can check that if $w_{i-1}$ differs from $w_i$ or $w_{i+1}$ by more than one, there is a Coxeter-Knuth move of type two on $v_{i-1}v_iv_{i+1}$.
In the event that they differ by exactly one and the smallest entry is bumped, we see in Figure~\ref{fig:transitional bumps 2} that after the bump they differ by a Coxeter-Knuth move of the third type.

\item Let $\alpha$ be a Coxeter-Knuth move of the third type. Note the middle entry cannot be bumped unless all three entries are bumped. In the event fewer entries (but not zero) are bumped, we see in Figure~\ref{fig:transitional bumps 2} that there will be a Coxeter-Knuth move of the first or second type remaining.

We next show that the rest of the Little bump proceeds in the same manner once the crossings involved in the Coxeter-Knuth move have been bumped. 
To see this, we need only observe that the last bumped swap is between the same two trajectories. This can be verified readily by examining Figures~\ref{fig:transitional bumps 1} and~\ref{fig:transitional bumps 2}.

The preceding argument assumes that the bumping path does not return to the crossings involved in the Coxeter-Knuth move.  
It is possible that the bumping path passes through the crossings involved in the Coxeter-Knuth path twice (but no more than that, by Lemma~\ref{lem:decrement once}).  
However, the same argument applies, showing that all three crossings are bumped regardless of whether the Coxeter-Knuth move is performed before or after the bump.

\end{enumerate}

\end{proof}

\begin{figure}
\caption{Transitional bumps for type one and two Coxeter-Knuth moves.
\label{fig:transitional bumps 1}}
\vspace{10pt}
\centering
\pic[scale =.5]

\draw (0,0) -- (1,1) -- (2,1) -- (3,2);
\draw (0,1) -- (1,0) -- (3,0);
\draw (0,2) -- (1,2) -- (2,3) -- (3,3);
\draw (0,3) -- (1,3) -- (3,1);

\draw (.5,.5) circle (0.5);

\draw[thick, ->] (4,1.5) -- (5,1.5) node[above, midway] {$\uparrow$};

\draw (6,0) -- (9,0);
\draw (6,1) -- (8,3) -- (9,3);
\draw (6,2) -- (7,1) -- (8,1) -- (9,2);
\draw (6,3) -- (7,3) -- (9,1);

\draw (6.5,1.5) circle (0.5);

\draw[thick, <->] (1.5,-1) -- (1.5,-2) node[midway, left] {$\alpha$};

\draw (0,-3) -- (1,-4) -- (2,-4) -- (3,-5);
\draw (0,-4) -- (1,-3) -- (3,-3);
\draw (0,-5) -- (1,-5) -- (2,-6) -- (3,-6);
\draw (0,-6) -- (1,-6) -- (3,-4);

\draw (1.5,-5.5) circle (0.5);
\draw (2.5,-4.5) circle (0.5);

\draw[thick, ->] (4,-4.5) -- (5,-4.5) node[above, midway] {$\uparrow$};

\draw (6,-3) -- (8,-5) -- (9,-5);
\draw (6,-4) -- (7,-3)-- (8,-3) -- (9,-4);
\draw (6,-5) -- (7,-5) -- (9,-3);
\draw (6,-6) -- (9,-6);

\draw (8.5,-3.5) circle (0.5);

\epic
\hspace{36pt}
\pic[scale =.5]

\draw (0,0) -- (2,0) -- (3,1);
\draw (0,1) -- (2,3) -- (3,3);
\draw (0,2) -- (1,1) -- (2,1) -- (3,0);
\draw (0,3) -- (1,3) -- (2,2) -- (3,2);

\draw (2.5,.5) circle (0.5);

\draw[thick, ->] (4,1.5) -- (5,1.5) node[above, midway] {$\uparrow$};

\draw (6,0) -- (9,0);
\draw (6,1) -- (8,3) -- (9,3);
\draw (6,2) -- (7,1) -- (8,1) -- (9,2);
\draw (6,3) -- (7,3) -- (9,1);

\draw (8.5,1.5) circle (0.5);

\draw[thick, <->] (1.5,-1) -- (1.5,-2) node[midway, left] {$\alpha$};

\draw (0,-3) -- (2,-3) -- (3,-4);
\draw (0,-4) -- (2,-6) -- (3,-6);
\draw (0,-5) -- (1,-4) -- (2,-4) -- (3,-3);
\draw (0,-6) -- (1,-6) -- (2,-5) -- (3,-5);

\draw (1.5,-5.5) circle (0.5);
\draw (.5,-4.5) circle (0.5);

\draw[thick, ->] (4,-4.5) -- (5,-4.5) node[above, midway] {$\uparrow$};

\draw (6,-3) -- (8,-5) -- (9,-5);
\draw (6,-4) -- (7,-3)-- (8,-3) -- (9,-4);
\draw (6,-5) -- (7,-5) -- (9,-3);
\draw (6,-6) -- (9,-6);

\draw (6.5,-3.5) circle (0.5);

\epic
\end{figure}
\begin{figure}
\caption{Transitional bumps for type three Coxeter-Knuth moves
\label{fig:transitional bumps 2}}
\vspace{10pt}
\pic[scale =.5]
\draw (0,0) -- (1,0) -- (3,2);
\draw (0,1) -- (1,2) -- (2,2) -- (3,1);
\draw (0,2) -- (2,0) -- (3,0);
\draw (0,3) -- (3,3);

\draw (.5,1.5) circle (0.5);

\draw[thick, ->] (4,1.5) -- (5,1.5) node[above, midway] {$\uparrow$};

\draw (6,0) -- (7,0) -- (9,2);
\draw (6,1) -- (7,1) -- (8,0) -- (9,0);
\draw (6,2) -- (7,3) -- (9,3);
\draw (6,3) -- (7,2) -- (8,2) -- (9,1);

\draw (6.5,2.5) circle (0.5);

\draw[thick, <->] (1.5,-1) -- (1.5,-2) node[midway, left] {$\alpha$};

\draw (0,-3) --  (3,-3);
\draw (0,-4) -- (1,-4) -- (3,-6);
\draw (0,-5) -- (1,-6) -- (2,-6) -- (3,-5);
\draw (0,-6) -- (2,-4) -- (3,-4);

\draw (1.5,-4.5) circle (0.5);
\draw (2.5,-5.5) circle (0.5);

\draw[thick, ->] (4,-4.5) -- (5,-4.5) node[above, midway] {$\uparrow$};

\draw (6,-3) -- (7,-3) -- (9,-5);
\draw (6,-4) -- (7,-4) -- (8,-3) -- (9,-3);
\draw (6,-5) -- (7,-6) -- (9,-6);
\draw (6,-6) -- (7,-5) -- (8,-5) -- (9,-4);

\draw (7.5,-3.5) circle (0.5);

\epic
\hspace{36pt}
\pic[scale =.5]
\draw (0,0) -- (1,0) -- (3,2);
\draw (0,1) -- (1,2) -- (2,2) -- (3,1);
\draw (0,2) -- (2,0) -- (3,0);
\draw (0,3) -- (3,3);

\draw (2.5,1.5) circle (0.5);

\draw[thick, ->] (4,1.5) -- (5,1.5) node[above, midway] {$\uparrow$};

\draw (6,0) -- (7,0) -- (8,1) -- (9,1);
\draw (6,1) -- (7,2) -- (8,2) -- (9,3);
\draw (6,2) -- (8,0) -- (9,0);
\draw (6,3) -- (8,3) -- (9,2);

\draw (8.5,2.5) circle (0.5);

\draw[thick, <->] (1.5,-1) -- (1.5,-2) node[midway, left] {$\alpha$};

\draw (0,-3) --  (3,-3);
\draw (0,-4) -- (1,-4) -- (3,-6);
\draw (0,-5) -- (1,-6) -- (2,-6) -- (3,-5);
\draw (0,-6) -- (2,-4) -- (3,-4);

\draw (1.5,-4.5) circle (0.5);
\draw (.5,-5.5) circle (0.5);

\draw[thick, ->] (4,-4.5) -- (5,-4.5) node[above, midway] {$\uparrow$};

\draw (6,-3) -- (7,-3) -- (8,-4) -- (9,-4);
\draw (6,-4) -- (7,-5)-- (8,-5) -- (9,-6);
\draw (6,-5) -- (8,-3) -- (9,-3);
\draw (6,-6) -- (8,-6) -- (9,-5);

\draw (7.5,-3.5) circle (0.5);

\epic
\end{figure}

We now show that the action of a Coxeter-Knuth move on $Q(w)$ remains the same after applying a Little bump.
Combined with Lemma~\ref{lem:ck little commute}, this shows that the order in which Coxeter-Knuth moves and Little bumps are performed on a reduced word $w$ does not affect either the resulting reduced word or the resulting recording tableau.
\begin{lemma}
\label{lem:ck little on q}
Let $w$ be a reduced word, $\alpha$ be a Coxeter-Knuth move and $\uparrow$ a Little bump. 
Then $Q(w\alpha) = Q(w)t_{i,i+1}$ if and only if $Q(w{\uparrow} \alpha) = Q(w{\uparrow})t_{i,i+1}$.

\end{lemma}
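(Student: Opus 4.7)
By Lemma \ref{lem:ck little commute}, we have $(w\alpha){\uparrow} = (w{\uparrow})\alpha$ as reduced words, so both sides of the desired iff compare the recording tableau of the same pair of Coxeter-Knuth-equivalent words, evaluated before vs.\ after a Little bump. By Lemma \ref{lem:ck on q}, any Coxeter-Knuth move acts on $Q$ either as $t_{i-1,i}$ or as $t_{i,i+1}$, and the latter is possible only when the move is of type two and, concretely, only in the subcase in the proof of Lemma \ref{lem:ck on q} where $a<k<c$ for $k$ the largest entry of the row into which $a,b,c$ are inserted. So the lemma amounts to saying: this particular geometric subcase is insensitive to a preceding Little bump.

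The plan is a case analysis on the type of $\alpha$ viewed first as a move on $w$ and then as a move on $w{\uparrow}$. First I would dispose of all configurations in which at least one of the two moves is of type one or type three. In those configurations Lemma \ref{lem:ck on q} forces the $Q$-action to be $t_{i-1,i}$, so both sides of the iff are false provided the other side is also of a type that gives $t_{i-1,i}$. The necessary type transitions under a Little bump are precisely the ones tabulated in Figures \ref{fig:transitional bumps 1} and \ref{fig:transitional bumps 2}: a type-one move can remain type one or become type three; a type-three move can remain type three or become type one or type two; and a type-two move can remain type two or become type three. The only delicate mixed transitions are those producing a type-two move on one side; for these I would check by inspection of the corresponding wiring-diagram pictures that the triple $(a,b,c)$ produced falls in the $p=p'$ subbranch of Lemma \ref{lem:ck on q}, never the $a<k<c$ subbranch, so the action is still $t_{i-1,i}$.

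The main case, and the principal obstacle, is when $\alpha$ is a type-two move on both $w$ and $w{\uparrow}$. Here I would argue by tracking the row entries that the triple $(a,b,c)$ encounters during Edelman-Greene insertion of $w|_{i+2}$ versus $w{\uparrow}|_{i+2}$. By Corollary \ref{cor:descent structure} the descent pattern of the triple is preserved; by Lemma \ref{lem:decrement once} each letter decrements at most once, so the relative order of $a,b,c$ and of the key row entry $k$ is preserved or shifted uniformly by one. The subtlety is that $k$ itself lives inside an intermediate $P$-tableau and can a priori change unpredictably when we decrement letters earlier in $w$. The right way to handle this is to establish a supplementary lemma stating that a Little bump commutes with Edelman-Greene insertion up to a shift on the affected wire, so that the condition $a<k<c$ transfers faithfully to $a'<k'<c'$ on the bumped side. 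With such a lemma in hand the subcase distinction, and hence the iff, is immediate; without it one must instead close the argument by a finite but tedious enumeration of the local pictures in Figures \ref{fig:transitional bumps 1} and \ref{fig:transitional bumps 2}, reading off which row-end configuration each transition produces.
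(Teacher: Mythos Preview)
Your plan has a genuine gap in the main case, and the paper's argument avoids the difficulty by a completely different (and much shorter) route.

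Your proposed ``supplementary lemma'' --- that a Little bump commutes with Edelman-Greene insertion up to a shift --- is essentially Theorem~\ref{thm:q invariant under bump}, which is precisely what Lemma~\ref{lem:ck little on q} is being used to prove; invoking it here would be circular. The fallback of a ``finite but tedious enumeration of the local pictures'' does not work either: the quantity $k$ you need to control is the largest entry of a row of the intermediate tableau $P(w|_{i+2})$, which depends on \emph{all} of $w_{i+2},\dots,w_m$, not just on the three crossings appearing in Figures~\ref{fig:transitional bumps 1} and~\ref{fig:transitional bumps 2}. A Little bump can decrement arbitrarily many of those later letters, so there is no finite local picture from which the $a<k<c$ condition can be read off after the bump. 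Your diagnosis of the obstacle is exactly right; your proposed cures do not address it.

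The paper sidesteps the whole issue by never looking inside the insertion. The key observation is that for a type-two move $\alpha$ on positions $(i-1,i,i+1)$, the ``exceptional'' action $Q(w\alpha)=Q(w)t_{i,i+1}$ is equivalent to $P(w|_i)=P(w\alpha|_i)$, i.e.\ to the suffixes $w|_i$ and $w\alpha|_i$ being Coxeter--Knuth equivalent. By Theorem~\ref{thm:ck fix p} this gives a sequence of Coxeter--Knuth moves (on positions $\geq i$) carrying $w\alpha$ to $w$; Lemma~\ref{lem:ck little commute} then transports this same sequence through the bump, so $P(w{\uparrow}|_i)=P((w\alpha){\uparrow}|_i)$ as well, and the $t_{i,i+1}$ action follows on the bumped side. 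The converse comes for free because the inverse of a Little bump is a Little bump. No row-by-row analysis, and no tracking of $k$, is needed.
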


\begin{proof}
By Lemma~\ref{lem:ck on q}, we see $\alpha$ must exchange $w_{i-1}w_iw_{i+1}$ or $w_iw_{i+1}w_{i+2}$. 
We show the result in the case where $\alpha$ is a Coxeter-Knuth move on $w_{i-1}w_iw_{i+1}$, so that $\alpha$ is a Coxeter-Knuth move of type two. 
The other outcome then follows.

Let $w' = w\alpha$. Then $w|_{i} = w_i w_{i+1} w_{i+2} \dots w_n$ and $w'|_i = w_{i+1} w_i w_{i+2} \dots w_n$ are the parts of $w$ and $w'$ respectively to the right of $w_{i-1}$. 
Applying Edelman-Greene insertion to $w|_i$ and $w'|_i$, we see $P(w|_i) = P(w'|_i)$ and $Q(w|_i) = Q(w'|_i) t_{i, i+1}$. 
Therefore, there exists a sequence of Coxeter-Knuth moves $\alpha_1 \dots \alpha_m$ such that $ w|_i = w'|_i \alpha_1 \dots \alpha_m$. 
We then see 
\[
Q(w {\uparrow}|_i) = Q((w' \alpha_1 \dots \alpha_m){\uparrow} |_i) = Q((w' {\uparrow}) \alpha_1 \dots \alpha_m |_i)
\]
by Lemma~\ref{lem:ck little commute}.
Therefore $w {\uparrow}|_i$ and $w' {\uparrow}|_i$ differ solely at their first two positions and are Coxeter-Knuth equivalent, so we see $Q(w {\uparrow}|_i)$ and $Q(w'{\uparrow}|_i)$ have the same shape with $Q(w {\uparrow} |_i) = Q(w' {\uparrow} |_i) t_{i,i+1}$. 
Thus $Q(w{\uparrow})$ and $Q(w' {\uparrow})$ vary in the same way as $Q(w)$ and $Q(w')$. 

Since the inverse of a Little bump is a Little bump of the upside down word, where all Coxeter-Knuth move types are preserved, the converse holds as well. 
Therefore $Q(w\alpha) = Q(w)t_{i,i+1}$ if and only if $Q(w{\uparrow} \alpha) = Q(w{\uparrow})t_{i,i+1}$.

\end{proof}

\section{Proof of Results}
\label{sec:results}

\subsection{The Grassmannian case}
\label{ssec:grassmannian equality}

Before proving Theorem~\ref{thm:same map}, we need to establish the base case where $w$ is a Grassmannian word.
In order to do so, we must understand which entries are exchanging places with each swap. 
For $w = w_1 \dots w_m$ a reduced word, we define $\sigma^i = s_{w_1} s_{w_2} \dots s_{w_i}$ where $\sigma^0$ is the identity permutation.
The $k$th \emph{trajectory} of $w$ is the sequence $\{\sigma^i(k)\}^m_{i=0}$.
For $w$ a Grassmannian word of $\sigma = a_1a_2\dots a_{k}b_1b_2\dots b_{n-k}$, observe that the $j$th column of $\Tab(w)$ lists the times for all swaps featuring $b_j$. 
Since all such swaps increase the value of $b_j$, we can reconstruct its trajectory from the number and location of these swaps.
Similarly, we can reconstruct the trajectory of each $a_i$ from the $k+1-i$th row of $\Tab(w)$.
We will find it convenient to identify the $k$th trajectory of a Grassmannian word with the indices $\{i_1,i_2, \dots, i_{t_k}\} \subset [n]$ of the swaps featuring $k$.
Since insertion takes place from right to left, we index the entries such that $i_1>i_2>\dots>i_{t_k}$.
\begin{lemma}
\label{lem:grassmannian same map}
Let $w = w_1 \dots w_m$ be a reduced decomposition of a Grassmannian permutation $\sigma$.
Then $\Tab(w) = Q(w)$.

\end{lemma}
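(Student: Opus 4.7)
The plan is to proceed by induction on $s$, the number of Edelman-Greene insertions performed, with insertions going right to left. The inductive invariant is $Q_s = \Tab(w)|_{\leq s}$, the subtableau of $\Tab(w)$ consisting of boxes with entries at most $s$. The base case $s=0$ is trivial, since both tableaux are empty.

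For the inductive step, observe that $\Tab(w)$ is a standard Young tableau (as established in Section~\ref{ssec:grassmannian}), so $\Tab(w)|_{\leq s}$ is automatically a valid Young subdiagram, and $\Tab(w)|_{\leq s+1}$ differs from $\Tab(w)|_{\leq s}$ by a single addable corner. By the definition of $\Tab$, this corner sits at position $(k+1-i, j)$, where the swap $w_{m-s}$ exchanges the values $a_i$ and $b_j$ of $\sigma$. The inductive hypothesis gives $Q_s = \Tab(w)|_{\leq s}$, so $P_s$ and $\Tab(w)|_{\leq s}$ share the same shape. It therefore suffices to show that Edelman-Greene insertion of $w_{m-s}$ into $P_s$ adds a new box at exactly $(k+1-i, j)$.

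To carry out this verification, I would strengthen the induction to simultaneously describe the entries of $P_s$. Since the suffix $w_{m-s+1}\dots w_m$ is itself a reduced word for a specific permutation, the rows of $P_s$ admit an explicit description in terms of the partial wiring diagram formed by this suffix; in particular, row $r$ of $P_s$ should encode information about the crossings of the $r$-th $b$-trajectory. Given such a description, inserting $w_{m-s}$---which represents the crossing between the $a_i$- and $b_j$-trajectories that is being prepended---launches a bumping path that tracks the effect of this new crossing on the partial wiring diagram. The path then descends through $P_s$ row by row and terminates at row $k+1-i$, producing the desired new box.

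The main obstacle will be this structural description of $P_s$ together with careful handling of the special bumps of rule~2(a). Special bumps occur exactly when the partial wiring diagram locally exhibits a braid configuration, and they must be shown to propagate the bumping path through $P_s$ in a way that is consistent with the trajectory of $b_j$. A case analysis separating ordinary from special bumps should suffice to complete the inductive step and close the argument.
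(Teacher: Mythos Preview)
Your inductive framework is sound and would eventually succeed, but the paper takes a cleaner route that avoids most of the work you anticipate. Rather than tracking both the row and the column of each new box, the paper observes that it suffices to track only the column: it proves that whenever $w_l$ lies on the trajectory of $b_j$, the entire bumping path produced by inserting $w_l$ stays in column $j$ of $P$. Once this is established, the row of the new box is forced, since $Q_s$ and $\Tab(w)|_{\le s}$ are both standard and agree on shape by the column claim itself.

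The column-only argument needs far less control over $P_s$ than you propose. The single combinatorial input is the interleaving of the $b$-trajectories: for each $j$ and each $r$, the $r$th-from-last swap on $b_{j-1}$'s trajectory occurs to the right of the $r$th-from-last swap on $b_j$'s trajectory. This guarantees that when an element from $b_j$'s trajectory arrives at any row, column $j{-}1$ of that row is already occupied by something no larger, so the element lands in column $j$. No explicit formula for the entries of $P_s$ is required, and the special bumps of rule~2(a) need no separate case analysis---they simply keep the bumping path in the same column, which is all the argument asks for.

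One correction to your sketch: you write that ``row $r$ of $P_s$ should encode information about the crossings of the $r$-th $b$-trajectory,'' but it is the \emph{columns} of $P$ that correspond to the $b_j$; the rows are indexed by the $a_i$. Untangling this is part of what makes your route heavier than the paper's.
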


\begin{proof}
Let $\sigma = a_1 a_2 \dots a_{n}b_1b_2\dots b_{n-k}$ be a Grassmannian permutation with sole descent $a_{k}b_1$ and $w = w_1 \dots w_m$ a reduced decomposition of $\sigma$.
Note the trajectories of the $b_j$'s are non-intersecting as no two swap with each other.

We now show that when applying Edelman-Greene insertion to $w$, if $w_k$ is in the trajectory of $b_j$, then $w_k$ will be inserted into the $j$th column of $P_{n+1-k}(w)$ and each entry bumped during this insertion will in turn insert into the $j$th column.
From this and the definition of $\Tab$, we can conclude that $\Tab(w) = Q(w)$.

If $b_1$ has the only non-trivial trajectory amongst the $b_j$, then $Q(w) = \Tab(w)$ trivially; there is only one column in $\Tab(w)$. 
Assume there are multiple $b_j$ with non-trivial trajectories. 
Let $\{i_1,i_2,\dots, i_{t_2}\}$ be the trajectory of $b_2$.
Note $w_{i_k} = w_{i_{k+1}}+1$.
Then $b_1$ has trajectory $\{l_1, \dots, l_{t_1}\}$ with $t_1 \geq t_2$ and $l_{k} > i_{k}$, {\it i.e.} the $k$th from last swap featuring $b_1$ comes later than the $k$th from last featuring $b_2$ and so on.
Inserting into from right to left, we see that upon inserting any $w_{i_j}$, we will have already inserted $w_{l_j}$.
Therefore, $w_{i_1}$ will be inserted into the second column as any previously inserted entry will be from the trajectory of $b_1$, and thus have inserted into the first column.
When $w_{i_2}$ is inserted, it too will insert into the second column as $w_{l_2}$ will have been inserted into the first  column.
For identical reasons as before, $w_{i_1}$ will remain in the second column upon being bumped.
We then see inductively that, unimpeded by other swaps, the trajectory of $b_2$ will insert one after another into the second column.
%To see that this is the case, observe by identical reasoning that the trajectory of $b_3$ will insert one after another into the third column, and so on.
The same argument applies to $b_3$ and so on.
Thus $\Tab(w) = Q(w)$.
\end{proof}

% Should this corollary be included? It's already in the literature....

%\begin{corollary}
%\label{cor:rsk for grassmannian}
%For $w$ a Grassmannian word, Edelman-Greene insertion coincides with RSK insertion performed on the reverse word.
%\end{corollary}
%
%\begin{proof}
%The above argument shows there are no special bumps, so the two algorithms agree.
%
%\end{proof}

\subsection{The column reading word}
\label{ssec:column word}

The only ingredient missing from our argument is a canonical form that is invariant under Little bumps.
\begin{definition}
\label{def:column word}
For $T$ a Young tableau with columns $C^1, C^2 \dots , C^m$ where $C^i = c^i_1, c^i_2, \dots, c^i_k$ with $c^i_j$ being the $(j,i)$th entry of $T$, we define the \emph{column reading word} of $T$ to be the word $\tau(T) = C^m C^{m-1} \dots C^1$. 
Note if $T$ is row and column strict then $P(\tau(T)) = T$ and each column of $Q(\tau(T))$ has consecutive entries.
For $w$ a reduced word, we define $\tau(w)$ to be  $\tau(P(w))$. 
By the previous observation, $w$ and $\tau(w)$ are Coxeter-Knuth equivalent.

\end{definition}
One can think of the column reading word as closely related to the bottom-up reading word.
Since insertion takes place from right to left, the column reading word is in some sense its transpose.

\begin{lemma}
\label{lem:column word invariant}
Let $w$ be a reduced word and $\uparrow$ a Little bump on $w$. Then
\[
Q(\tau(w)) = Q(\tau(w){\uparrow}).
\]

\end{lemma}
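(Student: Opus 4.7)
The plan is to show that $\tau(w){\uparrow}$ is again a column reading word of a Young tableau with the same shape $\lambda$ as $P(w)$; once this is established, the conclusion is immediate from the remark in Definition 4.2.1 that $Q(\tau(\cdot))$ consists of consecutive integers in each column. Specifically, $Q(\tau(w))$ is uniquely determined by $\lambda$: its $j$th column from the left is filled with the integers $\sum_{i<j}k_i+1,\ldots,\sum_{i\le j}k_i$, where $k_i$ is the $i$th column length of $\lambda$. If $\tau(w){\uparrow}$ is a column reading word of the same shape, then $Q(\tau(w){\uparrow})$ has the same description and therefore coincides with $Q(\tau(w))$.

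First I would invoke Corollary 2.3.3 to conclude that $\tau(w){\uparrow}$ has the same descent structure as $\tau(w)$. Writing $\tau(w)=C^m C^{m-1}\ldots C^1$ as a concatenation of strictly increasing blocks of lengths $k_m, k_{m-1},\ldots, k_1$, the word $\tau(w){\uparrow}$ decomposes analogously as $D^m D^{m-1}\ldots D^1$ with $|D^j|=k_j$ and each $D^j$ strictly increasing. Arranging the $D^j$ as the columns of a candidate tableau $P'$ immediately gives column strictness.

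The essential remaining step is to verify that $P'$ is also row strict, i.e.\ $D^j_i<D^{j+1}_i$ for all valid $i,j$. I expect to carry this out by an explicit analysis of the decrement sequence in the Little bump: Lemma 2.3.2 guarantees the decremented positions are distinct, and one analyzes how each elementary decrement interacts with the column-reading structure of $\tau(w)$. The ``increment-all-else'' rule, triggered when a decremented position currently holds the value $1$, should be treated as a uniform shift applied to a contiguous block of columns, which manifestly preserves the inter-column inequalities. The main obstacle is the bookkeeping when the bump cascades across several columns, since ordinary decrements and the global increment shift can interleave in a somewhat delicate way; the two worked examples hidden in my scratch-work (bumps starting in $C^1$ and in $C^2$ of a $2\times 2$ column reading word) both confirm that the net modification either leaves a column unchanged or shifts an entire column uniformly, each of which is compatible with row strictness. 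Once row strictness is verified, $P'$ has shape $\lambda$, we have $\tau(w){\uparrow}=\tau(P')$, and the conclusion follows from the uniqueness of $Q$ for column reading words of a given shape.
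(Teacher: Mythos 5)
Your overall strategy is the same as the paper's: show that $\tau(w){\uparrow}$ is again the column reading word of a row-and-column-strict tableau with the same column lengths, then conclude from the observation in Definition~\ref{def:column word} that $Q$ of such a word is the unique standard tableau of that shape with consecutively filled columns. Your column-strictness step (each block $D^j$ stays strictly increasing, via Corollary~\ref{cor:descent structure}) is also exactly the paper's. But the row-strictness claim $d^k_i < d^{k+1}_i$ is the entire content of the lemma, and you do not prove it: you announce a plan (``explicit analysis of the decrement sequence''), and the invariant that plan rests on is false. A Little bump does not ``leave a column unchanged or shift an entire column uniformly.'' Concretely, take $T$ with rows $(1,2),(2,3)$, so $\tau(T)=2312$ (this is $\tau(w)$ for $w=2312$); the Little bump begun at the first letter touches the first and third letters, the increment-all rule fires, and the result is $2413$. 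Here $C^1=(1,2)\mapsto D^1=(1,3)$ and $C^2=(2,3)\mapsto D^2=(2,4)$: each block has one entry fixed and one entry shifted, so neither alternative of your dichotomy holds. The modified crossings follow the bumping path, which in general wanders across columns (in this example it runs along a row), so column-by-column bookkeeping of the sort you describe has no reason to close up inductively.

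The paper obtains row strictness with no tracking of the bumping path at all, and this is the idea your write-up is missing. After normalizing away a possible global increment (which changes neither $Q$ nor the descent structure), Lemma~\ref{lem:decrement once} says every letter is decremented at most once, so $d^k_i \le c^k_i \le d^k_i+1$; since $P(\tau(w))$ is row and column strict, $c^k_i+1\le c^{k+1}_i$, and the paper combines these into the chain $d^k_i \le c^k_i \le d^k_i+1 \le d^{k+1}_i$. This is precisely your missing inequality, and the strictness also rules out special bumps during insertion: when the blocks $D^m,\dots,D^1$ are inserted, each entry of $D^{k+1}$ exceeds the entry of $D^k$ in the same row, so no entry of $D^{k+1}$ ever bumps an entry sitting in column $k$, and each $D^k$ piles up as the $k$th column of $P(\tau(w){\uparrow})$. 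Replacing your third paragraph with this short inequality argument turns your sketch into the paper's proof.
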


\begin{proof}
Let $w$ be a reduced word, $\tau(w) = C^m C^{m-1} \dots C^1$  and $\tau(w) {\uparrow} = D^m D^{m-1} \dots D^1$ (note $D^k$ is not {\it a priori} a column of $P(\tau(w){\uparrow})$). 
Since $\tau(w)$ and $\tau(w){\uparrow}$ have the same descent structure, we see $C^1$ and $D^1$ insert identically. 
As each entry of $\tau(w) {\uparrow}$ is decremented at most once and $P(\tau(w))$ is row and column strict, we see
\[
d^k_i \leq c^k_i \leq d^k_i+1 \leq d^{k+1}_i,
\]
so $d^{k+1}_i$ will not bump any $d^k_j$ with $j \leq i$. 
Therefore, any entry of $D^{k}$ will stay in the $k$th column of $P(\tau(w){\uparrow})$ for all $k$, that is the entries of the $k$th column of $P(\tau(w){\uparrow})$ are $D^k$. 
Thus $\tau(w) {\uparrow}$ is a column reading word with identical column sizes, so $Q(\tau(w)) = Q(\tau(w) {\uparrow})$.

\end{proof}

\subsection{Proof of Theorem~\ref{thm:same map} and its corollaries}
\label{ssec:proof of thm}

Combining Lemma~\ref{lem:column word invariant} with Lemmas~\ref{lem:ck little commute} and~\ref{lem:ck little on q}, we can conclude the following:
\begin{theorem}
\label{thm:q invariant under bump}
Let $w$ be a reduced word and $\uparrow$ be a Little bump on $w$. Then
\[
Q(w) = Q(w{\uparrow}).
\]

\end{theorem}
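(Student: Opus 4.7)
The plan is to lift the special case proved in Lemma~\ref{lem:column word invariant} to an arbitrary reduced word $w$ by using Coxeter-Knuth equivalence as a bridge, exploiting the good interaction between Coxeter-Knuth moves and Little bumps established in Lemmas~\ref{lem:ck little commute} and~\ref{lem:ck little on q}. The whole statement boils down to transferring an identity that we already know for the column reading word back along a Coxeter-Knuth chain.

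First I would recall that, by Definition~\ref{def:column word}, $\tau(w)$ is Coxeter-Knuth equivalent to $w$, so we may fix a sequence $\alpha_1,\alpha_2,\ldots,\alpha_r$ of Coxeter-Knuth moves with
\[
\tau(w) = w\alpha_1\alpha_2\cdots\alpha_r.
\]
Applying Lemma~\ref{lem:ck little commute} repeatedly to slide the Little bump past each $\alpha_j$ gives
\[
\tau(w){\uparrow} \;=\; (w{\uparrow})\alpha_1\alpha_2\cdots\alpha_r,
\]
so the \emph{same} Coxeter-Knuth sequence carries $w{\uparrow}$ to $\tau(w){\uparrow}$.

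Next I would track the effect of this sequence on the recording tableaux. By Lemma~\ref{lem:ck on q}, each $\alpha_j$ acts on $Q$ by a transposition $t_{i-1,i}$ or (in the type-two case) $t_{i,i+1}$. The key point, and the step where the argument could have failed, is that a priori the choice between these two transpositions might depend on which of the two parallel chains, starting from $w$ or from $w{\uparrow}$, we are looking at. However, Lemma~\ref{lem:ck little on q} is precisely an if-and-only-if that rules out this mismatch: the transposition realized at step $j$ is the same in both chains. By induction on $j$, the cumulative permutation $\pi$ of entry labels induced by $\alpha_1,\ldots,\alpha_r$ is therefore identical in the two chains, so
\[
Q(\tau(w)) = \pi\cdot Q(w) \qquad\text{and}\qquad Q(\tau(w){\uparrow}) = \pi\cdot Q(w{\uparrow}).
\]

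Finally, Lemma~\ref{lem:column word invariant} supplies $Q(\tau(w)) = Q(\tau(w){\uparrow})$, hence $\pi\cdot Q(w) = \pi\cdot Q(w{\uparrow})$, and invertibility of $\pi$ gives $Q(w) = Q(w{\uparrow})$. I do not anticipate any further obstacle: the genuinely delicate analysis was already carried out inside Lemma~\ref{lem:ck little on q}, whose biconditional is exactly what makes the inductive cancellation work.
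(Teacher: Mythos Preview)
Your proposal is correct and follows essentially the same approach as the paper: reduce to the column reading word via a Coxeter-Knuth chain, invoke Lemma~\ref{lem:column word invariant} there, and use Lemmas~\ref{lem:ck little commute} and~\ref{lem:ck little on q} to transport the identity back along the chain. Your write-up is in fact more explicit than the paper's about why Lemma~\ref{lem:ck little on q} is needed---namely, to guarantee that the same transposition is applied at each step of both parallel chains---whereas the paper compresses this into a single line citing the two lemmas.
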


\begin{proof}
Let $w$ be a reduced word. 
There exists a sequence $\alpha_1, \alpha_2, \dots, \alpha_k$ of Coxeter-Knuth moves such that $w = \tau(w)\alpha_1 \dots \alpha_k$.
As $Q(\tau(w)) = Q(\tau(w){\uparrow})$ by Lemma~\ref{lem:column word invariant}, we compute
\begin{align*}
Q(w) & = Q(\tau(w)\alpha_1\dots \alpha_k)\\
 & = Q((\tau(w){\uparrow})\alpha_1\dots \alpha_k)\\
& = Q((\tau(w)\alpha_1 \dots \alpha_k){\uparrow}) = Q(w{\uparrow})
\end{align*}
where the third equality follows by Lemmas~\ref{lem:ck little commute} and~\ref{lem:ck little on q}.
\end{proof}

\begin{proof}[Proof of Theorem~\ref{thm:same map}]
Let $w$ be a reduced word and ${\uparrow_{i_1}},\dots,{\uparrow_{i_k}}$ be the sequence of canonical Little bumps.
By Theorem~\ref{thm:q invariant under bump} and Lemma~\ref{lem:grassmannian same map}, we see
\[
Q(w) = Q(w{\uparrow_{i_1}}\dots{\uparrow_{i_k}}) = \Tab(w{\uparrow_{i_1}}\dots{\uparrow_{i_k}}) = \LS(w).
\]

\end{proof}

We now demonstrate several corollaries, including Lam's Conjecture.
The first is a simple consequence of Corollary~\ref{cor:descent structure}.
\begin{corollary}
The descent structure of a reduced word $w$ is determined by $Q(w)$.
\end{corollary}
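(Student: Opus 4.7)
The plan is to exploit Theorem~\ref{thm:same map} to reduce the problem to the Grassmannian case, where descent data is visibly encoded by the tableau. The argument runs in two short steps.

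First, I would invoke Theorem~\ref{thm:same map} to rewrite $Q(w) = \LS(w) = \Tab(w_G)$, where $w_G = w{\uparrow_{i_1}}\cdots{\uparrow_{i_k}}$ is the Grassmannian reduced word produced by the canonical sequence of Little bumps starting at $w$. Because the map $w_G \mapsto \Tab(w_G)$ is explicit and reversible (up to shifts by trailing or leading fixed points of the underlying permutation, which do not alter the letters of the reduced word itself), knowing $Q(w)$ pins down $w_G$, and in particular its descent pattern can be read off directly.

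Second, I would transport that descent pattern back to $w$ using Corollary~\ref{cor:descent structure}, which says that each individual Little bump preserves the descent indicator at every position. Iterating over the finitely many bumps ${\uparrow_{i_1}},\ldots,{\uparrow_{i_k}}$ shows that $w$ and $w_G$ have identical descent structures, and combining this with the previous step establishes the corollary.

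The proof presents no substantive obstacle once Theorem~\ref{thm:same map} is in hand; it is essentially bookkeeping through the definitions of $\LS$ and $\Tab$. The only slightly delicate point is the near-bijectivity of $\Tab$ on Grassmannian words, which one could sidestep entirely by formulating a direct criterion: $i$ is a descent of $w$ if and only if entry $m{-}i$ lies in a strictly lower row of $Q(w)$ than entry $m{+}1{-}i$. This criterion can be verified first for Grassmannian words straight from the definition of $\Tab$ (where the row/column labels $a_j, b_j$ record exactly the two wires involved in each swap) and then transferred to arbitrary reduced words via the descent-preservation property of Little bumps.
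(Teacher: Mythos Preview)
Your proposal is correct and follows essentially the same route as the paper: the paper states only that the corollary is ``a simple consequence of Corollary~\ref{cor:descent structure},'' and your two-step argument (pass to the Grassmannian word $w_G$ via the Little map so that $Q(w)=\Tab(w_G)$ determines the descents of $w_G$, then pull the descent structure back to $w$ using Corollary~\ref{cor:descent structure}) is exactly the intended unpacking of that remark. Your optional direct row-comparison criterion is a nice addendum but not needed for the proof.
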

There is an analogous result for the Robinson-Schensted-Knuth algorithm, which appears first in a paper of Sch\"utzenberger~\cite{schutzenberger1963}.  It was subsequently rediscovered by Foulkes~\cite{foulkes1976enumeration}.  The proof can also be found in the standard reference\cite[Lemma 7.23.1]{stanley2001enumerative}.
For sorting networks, this result was proved in~\cite{edelman1987balanced}.

The next is Conjecture 11 from~\cite{little2005factorization}, which first appeared as Conjecture 4.3.3 in the appendix of~\cite{garsia2002saga}.
\begin{corollary}
Let $w$ be a reduced word and let ${\uparrow_{i_1}}, {\uparrow_{i_2}}, \dots, {\uparrow_{i_m}}$ be any sequence of Little bumps such that 
\[
v = w{\uparrow_{i_1}} \dots {\uparrow_{i_m}}
\]
is a Grassmannian word. Then $\Tab(v) = \LS(w)$.

\end{corollary}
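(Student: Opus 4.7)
The plan is to reduce this corollary essentially immediately to the machinery already established in the proof of Theorem~\ref{thm:same map}. The key observation is that Theorem~\ref{thm:q invariant under bump} asserts $Q(w) = Q(w{\uparrow})$ for \emph{any} Little bump $\uparrow$, not merely for the particular canonical bumps used in the definition of $\LS$. So the recording tableau is a genuine invariant of the equivalence relation generated by Little bumps, and nothing about the corollary depends on which sequence of bumps is chosen to terminate at a Grassmannian word.

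Concretely, I would argue as follows. Applying Theorem~\ref{thm:q invariant under bump} iteratively along the given sequence ${\uparrow_{i_1}}, \dots, {\uparrow_{i_m}}$ yields
\[
Q(w) \;=\; Q(w{\uparrow_{i_1}}) \;=\; Q(w{\uparrow_{i_1}}{\uparrow_{i_2}}) \;=\; \cdots \;=\; Q(v).
\]
Since $v$ is by hypothesis a Grassmannian word, Lemma~\ref{lem:grassmannian same map} gives $Q(v) = \Tab(v)$. On the other hand, Theorem~\ref{thm:same map} gives $Q(w) = \LS(w)$. Chaining these equalities produces
\[
\Tab(v) \;=\; Q(v) \;=\; Q(w) \;=\; \LS(w),
\]
which is the desired conclusion.

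There is no genuine obstacle here; the work is already done in Theorem~\ref{thm:q invariant under bump}. The only conceptual point worth emphasizing is that this corollary provides a strong uniqueness statement: the tableau produced by the Little algorithm does not depend on the rule for choosing successive bumps (lexicographically last inversion, as in the definition of $\LS$), but only on the terminal Grassmannian word reached. In particular, any heuristic or non-canonical procedure for reducing $w$ to a Grassmannian word via Little bumps computes the same tableau $\LS(w)$. This resolves Conjecture~11 of~\cite{little2005factorization}.
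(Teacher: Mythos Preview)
Your proof is correct and follows the same route the paper intends: the paper simply states that the corollary follows from Theorem~\ref{thm:q invariant under bump}, and you have spelled out the chain of equalities (using Lemma~\ref{lem:grassmannian same map} and Theorem~\ref{thm:same map}) that makes this precise. One minor remark: invoking Theorem~\ref{thm:same map} is convenient but not strictly necessary, since applying your iterated-$Q$-invariance argument to the \emph{canonical} bump sequence already gives $\LS(w)=\Tab(w')=Q(w')=Q(w)$ directly; either way the content is the same.
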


This follows from Theorem~\ref{thm:q invariant under bump}. We can extend this result further.
Let $\lambda$ be a partition with $w$ a Grassmannian word whose corresponding tableau is of shape $\lambda$. 
The permutation $\sigma $ associated to $w$ can be characterized by the number of initial fixed points and terminal fixed points. 
A Grassmannian permutation is \emph{minimal} if it has no initial or terminal fixed points. 
Note the minimal Grassmannian permutation of a given shape is unique. Recall two reduced words \emph{communicate} if there exists a sequence of Little bumps and inverse Little bumps changing one to the other.

\begin{proof}[Proof of Theorem~\ref{thm:lam}]
% This proof can be trimmed substantially. Will get to the that (I think it's in the literature that you can start a bump anywhere with Grassmannian words).
Let $v$ and $w$ be reduced words.
Suppose first that $v$ and $w$ communicate. 
Then by Theorem \ref{thm:q invariant under bump}, we have that $Q(v) = Q(w)$.

Conversely, suppose that $Q(v)=Q(w)$.  
By applying the Little map, $w$ can be changed to the Grassmannian word $w'$ and $v$ to the Grassmannian word $v'$ by a sequence of Little bumps. 
Since $Q(w) = Q(w')$ and $Q(v) = Q(v')$, we can conclude that $v$ and $w$ communicate if Grassmannian permutations of the same shape communicate. 
To do this, we demonstrate a sequence of Little bumps that adds a fixed point at the end of an arbitrary Grassmannian permutation, and another sequence that converts a fixed point at the beginning into one at the end. 
By converting any fixed points at the beginning into ones at the end, then removing those at the end via inverse bumps, we get the minimal Grassmannian permutation of that shape.
Therefore, any Grassmannian permutation communicates with the minimal permutation of that shape. 
From this, we can conclude any two Grassmannian permutations with the same shape communicate.

We now construct our sequence of Little bumps. 
Let $\sigma = a_1 \dots a_kb_1\dots b_{n-k}$ be a Grassmannian permutation with $a_{k}b_1$ its sole descent. 
Start a Little bump at the last swap featuring each $b_j$, beginning with $b_1$, so that the first bump begins between $b_1$ and $a_{k}$.
We will show this sequence of bumps decrements every entry in each trajectory exactly once. 
This is equivalent to decrementing each entry of $w$. 
If $\sigma$ has initial fixed points, this will remove one of them, leaving a fixed point at the end. 
If $\sigma$ has no initial fixed point, this will leave $w$ the same but add a fixed point to the end of $\sigma$.

We now verify that our sequence works as described. 
First, we must verify that the swap locations at which we begin a Little bump are valid choices, that is that removing that swap from $w$ leaves a reduced word. 
To see this, note that the first such swap chosen is the swap between $a_{k}$ and $b_1$, the last swap in $w$. 
This bump will decrement every entry in the trajectory of $b_1$. 
After the first Little bump, the second swap chosen is the last in the trajectory of $b_2$. 
Since the trajectories of all $b_j$ with $j>2$ are unaffected by the initial Little bump, this is the last swap for both $b_2$ and $a_{k}$, so removing it leaves a reduced word. 
This bump will decrement every entry in the trajectory of $b_2$. 
Note because we have already decremented the swaps in the trajectory of $b_1$ and these trajectories were initially disjoint, they will remain disjoint after the second Little bump. Applying this line of reasoning inductively, we see that each Little bump in the sequence is a valid Little bump which decrements every entry of each trajectory. We have now shown $v$ and $w$ communicate if $Q(v) = Q(w)$.

\end{proof}

Additionally, we show how to embed Robinson-Schensted insertion and RSK in the Little map.
In doing so, we recover the main results of~\cite{little2005factorization} through a much simplified argument.
This embedding was first predicted as Conjecture 4.3.1 in the appendix of~\cite{garsia2002saga}.

\begin{theorem}
\label{thm:little rs}
Let $\sigma = \sigma_1 \dots \sigma_n \in S_n$, so that $w(\sigma) = (2\sigma_n - 1) \dots (2\sigma_1 -1)$ is a reduced word as it has no repeated entries.
Let $\mbox{RS}(\sigma) = (P'(\sigma),Q'(\sigma))$ be the output of Robinson-Schensted insertion applied to $\sigma$.
Upon applying the transformation $k \mapsto (k+1)/2$ to the entries of $\LS(w(\sigma))$, we obtain $Q'(\sigma)$. We can obtain $P'(\sigma)$ by applying the same transformation to $\LS(w(\sigma^{-1}))$. 

\end{theorem}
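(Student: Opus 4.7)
The plan is to exploit the structural feature that $w(\sigma)$ consists of $n$ distinct odd positive integers. This will force Edelman--Greene insertion on $w(\sigma)$ to coincide with ordinary Robinson--Schensted insertion, and Theorem~\ref{thm:same map} then delivers the desired identification with $\LS(w(\sigma))$.

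First I would verify that no special bump (rule 2(a) in the insertion rule) ever occurs during the insertion of $w(\sigma)$. A special bump applied to an inserted entry $n$ requires adjacent row entries equal to $n$ and $n+1$, which are of opposite parities. Every entry of $w(\sigma)$ is odd, and every entry ever appearing in $P_j(w(\sigma))$ is either freshly inserted from $w(\sigma)$ or bumped upward from a lower row, so a straightforward induction shows that all tableau entries remain odd throughout the process. Two odd values differing by $1$ are impossible, so the special-bump condition never holds. Consequently, EG insertion on $w(\sigma)$ proceeds identically to RSK insertion on the sequence $(2\sigma_1 - 1, 2\sigma_2 - 1, \ldots, 2\sigma_n - 1)$, since EG inserts right-to-left and this sequence is precisely the right-to-left reading of $w(\sigma)$.

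Next I would invoke two standard features of RSK: the recording tableau depends only on the relative order of the input sequence, and the entries of the insertion tableau are an order-preserving relabeling of the input values. Because $k \mapsto (k+1)/2$ sends the odd value $2\sigma_i - 1$ to $\sigma_i$, applying this relabeling to $P(w(\sigma))$ recovers $P'(\sigma)$, while the recording tableau satisfies $Q(w(\sigma)) = Q'(\sigma)$. By Theorem~\ref{thm:same map}, $\LS(w(\sigma)) = Q(w(\sigma))$, which gives the first claim. For the second, I would apply the classical RSK symmetry $\mathrm{RS}(\sigma^{-1}) = (Q'(\sigma), P'(\sigma))$, so that the first half of the theorem applied to $\sigma^{-1}$ yields $\LS(w(\sigma^{-1})) = Q'(\sigma^{-1}) = P'(\sigma)$.

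The only real obstacle is the no-special-bump verification, since special bumps are the sole respect in which Edelman--Greene insertion differs from ordinary RSK; once that observation is secured, the rest of the argument is a formal consequence of Theorem~\ref{thm:same map} together with textbook properties of the Robinson--Schensted correspondence.
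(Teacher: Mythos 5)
Your proposal is correct and follows essentially the same route as the paper: observe that no special bumps occur so that Edelman--Greene insertion on $w(\sigma)$ coincides with Robinson--Schensted insertion on $\sigma$, invoke Theorem~\ref{thm:same map} to identify $\LS(w(\sigma))$ with $Q(w(\sigma))$, and finish with the symmetry $\mathrm{RS}(\sigma^{-1}) = (Q'(\sigma), P'(\sigma))$. The only difference is that you supply the parity induction justifying the absence of special bumps, a point the paper asserts without proof, so your write-up is if anything slightly more complete.
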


\begin{proof}
Since $\LS(w) = Q(w)$ and there are no special bumps, Edelman-Greene insertion will perform the same insertion process on $w$ as Robinson-Schensted insertion performs on $\sigma$.
Therefore, upon applying the transformation $k \mapsto (k+1)/2$, we see $\LS(w(\sigma)) = Q(w(\sigma)) = Q'(\sigma)$. 
Since $\mbox{RS}(\sigma^{-1}) = (Q'(\sigma),P'(\sigma))$ (see {\it e.g.}~\cite{stanley2001enumerative}), we can obtain $P'(\sigma)$ by applying the same transformation to $\LS(w(\sigma^{-1}))$.

\end{proof}
Since RSK can be embedded in Robinson-Schensted insertion (see Section 7 of~\cite{little2005factorization} for a description of this process), Theorem~\ref{thm:little rs} recovers an embedding of RSK into the Little map as well. 

\bibliographystyle{plain}
\bibliography{eg-little.bib}

\begin{thebibliography}{10}

\bibitem{angel2011pattern}
O.~Angel, V.~Gorin, and A.E. Holroyd.
\newblock A pattern theorem for random sorting networks.
\newblock {\em arXiv preprint arXiv:1110.0160}, 2011.

\bibitem{angel2010random}
O.~Angel and A.E. Holroyd.
\newblock Random subnetworks of random sorting networks.
\newblock {\em the electronic journal of combinatorics}, 17(23):1, 2010.

\bibitem{angel2007random}
O.~Angel, A.E. Holroyd, D.~Romik, and B.~Vir{\'a}g.
\newblock {Random sorting networks}.
\newblock {\em Advances in Mathematics}, 215(2):839--868, 2007.

\bibitem{chhita2012asymptotic}
S.~Chhita, K.~Johansson, and B.~Young.
\newblock Asymptotic domino statistics in the aztec diamond.
\newblock {\em arXiv preprint arXiv:1212.5414}, 2012.

\bibitem{edelman1987balanced}
P.~Edelman and C.~Greene.
\newblock {Balanced tableaux}.
\newblock {\em Advances in Mathematics}, 63(1):42--99, 1987.

\bibitem{foulkes1976enumeration}
H.~O. Foulkes.
\newblock Enumeration of permutations with prescribed up-down and inversion
  sequences.
\newblock {\em Discrete Mathematics}, 15(3):235--252, 1976.

\bibitem{garsia2002saga}
Adriano Garsia.
\newblock {\em {The Saga of Reduced Factorizations of Elements of the Symmetric
  Group}}.
\newblock Labaratoire de combanatoire et d'informatique math\'ematique, 2002.

\bibitem{lam2010stanley}
T.~Lam.
\newblock {Stanley symmetric functions and Peterson algebras}.
\newblock {\em Arxiv preprint arXiv:1007.2871}, 2010.

\bibitem{lascoux1985schubert}
A.~Lascoux and M.P. Sch{\"u}tzenberger.
\newblock {Schubert polynomials and the Littlewood-Richardson rule}.
\newblock {\em letters in mathematical physics}, 10(2):111--124, 1985.

\bibitem{little2005factorization}
David~P. Little.
\newblock {Factorization of the Robinson--Schensted--Knuth correspondence}.
\newblock {\em Journal of Combinatorial Theory, Series A}, 110(1):147--168,
  2005.

\bibitem{littleapplet}
David~P. Little.
\newblock {Diagram Viewer: a java applet implementing the Little bijection}.
\newblock http://www.math.psu.edu/dlittle/DiagramViewer.zip, 2012.

\bibitem{little2003combinatorial}
D.P. Little.
\newblock {Combinatorial aspects of the Lascoux-Sch{\"u}}tzenberger tree.
\newblock {\em Advances in Mathematics}, 174(2):236--253, 2003.

\bibitem{schutzenberger1963}
M.~P. Sch{\"u}tzenberger.
\newblock {Quelques remarques sur une Construction de Schensted.)}.
\newblock {\em Math. Scand}, 12:117--128, 1963.

\bibitem{stanley1984number}
R.P. Stanley.
\newblock {On the number of reduced decompositions of elements of Coxeter
  groups}.
\newblock {\em Eur. J. Comb.}, 5:359--372, 1984.

\bibitem{stanley2001enumerative}
R.P. Stanley.
\newblock {\em {Enumerative Combinatorics}}, volume~2.
\newblock Cambridge Univ Pr, 2001.

\bibitem{sage}
W.\thinspace{}A. Stein et~al.
\newblock {\em {S}age {M}athematics {S}oftware ({V}ersion 5.3)}.
\newblock The Sage Development Team, 2012.
\newblock {\tt http://www.sagemath.org}.

\end{thebibliography}

\end{document}